\documentclass[reqno]{amsart}
\usepackage{amsmath,amssymb}
\usepackage{graphicx,color}
\usepackage{tikz-cd}
\textwidth=13truecm
%\textheight=18truecm
%\oddsidemargin=0.5truecm
%\evensidemargin=0.5truecm
\theoremstyle{plain}
\newtheorem{introtheorem}{Theorem}

\newtheorem{theorem}{Theorem}[section]
\newtheorem{proposition}[theorem]{Proposition}
\newtheorem{lemma}[theorem]{Lemma}
\newtheorem{corollary}[theorem]{Corollary}

\theoremstyle{definition}
\newtheorem{definition}[theorem]{Definition}
\newtheorem{example}[theorem]{Example}

\theoremstyle{remark}
\newtheorem{remark}[theorem]{Remark}

\def\E{{\mathcal E}}

\def\D{{\mathcal D}}

\def\Q{{\mathbb Q}}

\def\L{{\mathcal L}}

\def\K{\mathrm{GL}}

\def\cat0{\mathrm{cat}_0}

\def\dim{\mathrm{dim}\hspace{0.2mm}}
\def\ker{\mathrm{ker}}

\def\im{\mathrm{im}}
\def\aut{\mathrm{aut}}

\begin{document}

\title[]
{  Self-Homotopy Equivalence Group of an  Elliptic Space and Its Embedding in general Linear Groups}

\author{Mahmoud Benkhalifa}
\address{Department of Mathematics. Faculty of  Sciences, University of Sharjah. Sharjah, United Arab Emirates}

\email{mbenkhelifa@sharjah.ac.ae}

%\date{\today}

\keywords{
Group of homotopy self-equivalences, Whitehead exact sequence, Sullivan model, Elliptic Spaces, Linear Groups}

\subjclass[2000]{ 55P10, 55P62}
\begin{abstract}
For a rational elliptic space, this paper examines the relationship between its homotopy groups and its self-homotopy equivalence group. Moreover, we investigate how this group is embedded in general linear groups.
\end{abstract}
\maketitle
\section{Introduction}
In this paper, the term ``space'' refers to a simply connected rational CW-complex \( X \) of finite type. This implies that for every \( n \), the cohomology group \( H^n(X) = H^n(X; \mathbb{Z}) \) is a finite-dimensional vector space over \( \mathbb{Q} \). A space \( X \) is said to be \textit{elliptic} if both \( H^*(X) \) and \( \pi_*(X) \) are finite-dimensional. The \textit{formal dimension} of \( X \) is given by  $n = \max \{ i \mid H^i(X) \neq 0 \}
$ (see \cite{FHT}, §32).  

Given a space \( X \), we define \( \mathcal{E}(X) \) as the group of self-homotopy equivalences of \( X \) and \( \mathcal{E}_{\#}(X) \) as the subgroup of those equivalences that induce the identity on homotopy groups \cite{ B2}, that is,  
\[
\mathcal{E}_{\#}(X) = \big\{ [f] \in \mathcal{E}(X) \mid \pi_*(f) = \operatorname{id} : \pi_*(X) \to \pi_*(X) \big\}.
\]  

This paper focuses on the relationship between the homotopy groups \( \pi_*(X) \) and the finiteness of \( \mathcal{E}(X) \). More precisely, we explore the conditions on the homology of an elliptic space \( X \) that ensure the finiteness of \( \mathcal{E}(X) \). To address this, we analyze the following exact sequence associated with the minimal Sullivan model of \( X \):  
$$
\cdots \to \operatorname{Hom} \big( \pi_q(X), \mathbb{Q} \big) \overset{b^q}{\to} H^{q+1}(X^{[n]}) \to H^{q+1}(X) \overset{h^{q+1}}{\to} \operatorname{Hom} \big( \pi_{q+1}(X), \mathbb{Q} \big) \overset{b^{q+1}}{\to} \cdots.$$
Thus, making use of this exact sequence,  we establish  the  following results.
\begin{introtheorem}\mbox{\rm{$($Theorem} \ref{t3}}$)$. 
	Let \( X \) be an elliptic space, and let \( \pi_{m_1}(X), \dots, \pi_{m_k}(X) \) denote the nontrivial homotopy groups of \( X \), where \( m_1 \leq \cdots \leq m_k \). For each \( 1 \leq j \leq k \), define  
	\begin{equation*} \label{z88}
		\mathcal{L}^{m_j}(X) = \mathrm{Hom} \big( \pi_{m_j}(X), H^{m_j}(X^{[m_j-1]}) \big).
	\end{equation*}  
	Then, the group of self-homotopy equivalences \( \mathcal{E}(X) \) is a subgroup of the group  
	\[
	\mathcal{L}^{m_k}(X) \rtimes \K(p_k, \mathbb{Q}) \times \cdots \times \mathcal{L}^{m_2}(X) \rtimes \K(p_2, \mathbb{Q}) \times \K(p_1, \mathbb{Q}),
	\]
	where \( p_j = \dim \pi_{m_j}(X) \) for all \( 1 \leq j \leq k \).
\end{introtheorem}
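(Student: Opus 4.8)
The plan is to compute with the minimal Sullivan model $(\Lambda V,d)$ of $X$ and to use the classical identification of $\E(X)$ with $\pi_0\big(\aut(\Lambda V,d)\big)$, the group of homotopy classes of automorphisms of $(\Lambda V,d)$. Because $X$ is elliptic, $V$ is finite dimensional and splits as $V=\bigoplus_{j=1}^{k}V^{m_j}$ with $\dim V^{m_j}=\dim\Hom\big(\pi_{m_j}(X),\Q\big)=p_j$. For each $j$ the sub-algebra $(\Lambda V^{\le m_j},d)$ is the minimal model of the Postnikov section $X^{[m_j]}$, and since any algebra map $\phi$ sends $V^{m_i}$ into $\big(\Lambda V^{\le m_i}\big)^{m_i}$, every automorphism of $(\Lambda V,d)$ — and every homotopy between two of them — restricts to $(\Lambda V^{\le m_j},d)$. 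This produces a tower of restriction homomorphisms
\[
\E(X)=\E\big(X^{[m_k]}\big)\xrightarrow{\ r_k\ }\E\big(X^{[m_{k-1}]}\big)\xrightarrow{\ r_{k-1}\ }\cdots\xrightarrow{\ r_2\ }\E\big(X^{[m_1]}\big),
\]
and the proof amounts to controlling each layer and then reassembling the tower inside the stated product.

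First I would analyse a single layer. Next to $r_j$ there is the homomorphism $\sigma_j\colon\E\big(X^{[m_j]}\big)\to\K(p_j,\Q)$ sending a class to the linear automorphism it induces on the indecomposables $V^{m_j}\cong\Hom\big(\pi_{m_j}(X),\Q\big)$; this is the contragredient of the action on $\pi_{m_j}(X)$, hence depends only on the homotopy class. The central point is to show
\[
\ker\Big((r_j,\sigma_j)\colon\E\big(X^{[m_j]}\big)\longrightarrow\E\big(X^{[m_{j-1}]}\big)\times\K(p_j,\Q)\Big)\;\cong\;\L^{m_j}(X).
\]
A class here is represented by a $\phi$ with $\phi|_{\Lambda V^{<m_j}}\simeq\mathrm{id}$; the lifting lemma for relative Sullivan algebras normalises it to $\phi|_{\Lambda V^{<m_j}}=\mathrm{id}$ and $\phi(v)=v+\theta(v)$ for $v\in V^{m_j}$, with $\theta(v)\in\big(\Lambda^{\ge2}V^{<m_j}\big)^{m_j}$; then $d\phi=\phi d$ forces $d\theta(v)=\phi(dv)-dv=0$, so $\theta$ is cocycle valued. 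The composition law $\phi_\theta\circ\phi_{\theta'}=\phi_{\theta+\theta'}$ makes this kernel abelian, modifying $\theta$ by a coboundary valued map yields a homotopic automorphism, and — this is the delicate direction — homotopic $\phi_\theta,\phi_{\theta'}$ have cohomologous $\theta$'s, so the kernel is recorded precisely by the class of $\theta$ in $\Hom\big(V^{m_j},H^{m_j}(\Lambda V^{<m_j})\big)$. By the exact sequence stated in the Introduction, $H^{m_j}(\Lambda V^{<m_j})=H^{m_j}\big(X^{[m_j-1]}\big)$, a $\Q$-vector space of the same dimension as $\L^{m_j}(X)$; and since every cocycle valued $\theta$ does define an automorphism, the kernel is all of $\L^{m_j}(X)$. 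Finally $\sigma_j$ normalises this kernel through the affine action of $\K(p_j,\Q)=\K\big(\pi_{m_j}(X)\big)$ on $\L^{m_j}(X)=\Hom\big(\pi_{m_j}(X),H^{m_j}(X^{[m_j-1]})\big)$, which produces the semidirect factor $\L^{m_j}(X)\rtimes\K(p_j,\Q)$.

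I would then reassemble the tower by induction on $j$. For $j=1$ the section $X^{[m_1]}$ is rationally a product of Eilenberg--MacLane spaces, $\aut\big(\Lambda V^{m_1},0\big)=\K(p_1,\Q)$ with trivial homotopy and $\L^{m_1}(X)=\Hom\big(\pi_{m_1}(X),H^{m_1}(\mathrm{pt})\big)=0$, so $\E\big(X^{[m_1]}\big)=\K(p_1,\Q)$, the base case of the statement. For the inductive step, the layer sequence together with the inductive embedding of $\E\big(X^{[m_{j-1}]}\big)$ into $\L^{m_{j-1}}(X)\rtimes\K(p_{j-1},\Q)\times\cdots\times\K(p_1,\Q)$ exhibits $\E\big(X^{[m_j]}\big)$ as an extension of a subgroup of that product times $\K(p_j,\Q)$ by the vector group $\L^{m_j}(X)$; choosing, for each admissible linear part, a compatible extension over $V^{m_j}$ gives a map of this extension into $\L^{m_j}(X)\rtimes\K(p_j,\Q)$ whose $\L^{m_j}(X)$-coordinate records the residual shear, and combining it with the inductive embedding lands $\E\big(X^{[m_j]}\big)$ in $\prod_{i\le j}\big(\L^{m_i}(X)\rtimes\K(p_i,\Q)\big)$. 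Injectivity is verified one coordinate at a time: a class killed everywhere has trivial restriction to $X^{[m_{j-1}]}$ and trivial linear part on $V^{m_j}$, hence lies in $\L^{m_j}(X)$, on which the last coordinate was arranged to be faithful. Taking $j=k$ gives the theorem.

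The step I expect to be the main obstacle is the single-layer identification $\ker(r_j,\sigma_j)\cong\L^{m_j}(X)$, and inside it the non-formal implication that two shear automorphisms $\phi_\theta,\phi_{\theta'}$ homotopic by a homotopy \emph{not} assumed constant on $\Lambda V^{<m_j}$ must already have $[\theta]=[\theta']$ in $H^{m_j}\big(X^{[m_j-1]}\big)$; this is exactly what forces the appearance of the Introduction's exact sequence and of the term $H^{q+1}\big(X^{[n]}\big)$ occurring in it. A second point requiring care in the reassembly is that conjugation in $\E\big(X^{[m_j]}\big)$ acts on $\L^{m_j}(X)$ through the action of $\E\big(X^{[m_{j-1}]}\big)$ on $H^{m_j}\big(X^{[m_j-1]}\big)$ as well as through $\K(p_j,\Q)$, so one must check that this extra twisting is already carried by the lower coordinates $\L^{m_i}(X)\rtimes\K(p_i,\Q)$ of the target before the displayed product is legitimate.
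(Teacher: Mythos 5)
Your proposal follows essentially the same route as the paper: your single-layer analysis of $\ker(r_j,\sigma_j)\cong\L^{m_j}(X)$ is exactly the paper's split short exact sequence $\mathrm{Hom}\big(V^{n},H^{n}(\Lambda V^{\leq n-1})\big)\rightarrowtail\mathcal{E}(\Lambda V^{\leq n})\twoheadrightarrow\D^{n}$ (Theorem \ref{m1}, with the ``delicate direction'' you flag being Lemma \ref{l1}), and your inductive reassembly down to the base case $\L^{m_1}(X)=0$, $\E(X^{[m_1]})\leq\K(p_1,\Q)$ is the iteration carried out in the paper's proof of Theorem \ref{t3}. The subtlety you raise at the end about the conjugation action of $\E(X^{[m_{j-1}]})$ on $\L^{m_j}(X)$ is a legitimate point about how the displayed product is to be read, but it does not change the containment argument.
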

\begin{introtheorem}\mbox{\rm{(Theorem} \ref{t4}).   }
	Let  $X$ be an elliptic space, and let $\pi_{m_1}(X),\cdots,\pi_{m_k}(X)$ be the nontrivial homotopy group of $X$. If   the group $\mathcal{E}_{}(X)$ is finite, then $\mathcal{E}(X)$ is a subgroup of 
	$  \K(p_{m_1},\Q)\times \cdots\times  \K(p_{m_k},\Q)$, where $p_{m_j}=\dim \pi_{m_j}(X)$ for all $1\leq j\leq k$.
\end{introtheorem}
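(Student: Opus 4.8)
The plan is to bootstrap from \thmref{t3}. Its proof already realizes $\mathcal{E}(X)$ as a subgroup of
\[
G \;=\; \mathcal{L}^{m_k}(X)\rtimes\K(p_{m_k},\Q)\times\cdots\times\mathcal{L}^{m_2}(X)\rtimes\K(p_{m_2},\Q)\times\K(p_{m_1},\Q)
\]
through an injective homomorphism $\Phi\colon\mathcal{E}(X)\hookrightarrow G$. Let $\pi\colon G\to\K(p_{m_1},\Q)\times\cdots\times\K(p_{m_k},\Q)$ be the projection killing each normal factor $\mathcal{L}^{m_j}(X)$, so that $\ker\pi=N:=\prod_{j=2}^{k}\mathcal{L}^{m_j}(X)$. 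The first — and essentially only — substantive step is to extract from the construction of $\Phi$ that the composite
\[
\pi\circ\Phi\colon\ \mathcal{E}(X)\ \longrightarrow\ \prod_{j=1}^{k}\K(p_{m_j},\Q)\ =\ \prod_{j=1}^{k}\aut\big(\pi_{m_j}(X)\big)
\]
is nothing but the map $[f]\mapsto\big(\pi_{m_1}(f),\dots,\pi_{m_k}(f)\big)$ recording the action of $f$ on homotopy groups. Granting this, $\ker(\pi\circ\Phi)=\mathcal{E}_{\#}(X)$ by the very definition of $\mathcal{E}_{\#}(X)$, since $\pi_{m_1}(X),\dots,\pi_{m_k}(X)$ exhaust the nonzero homotopy of $X$.

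Now the torsion argument. As $\Phi$ is injective, it restricts to an isomorphism of $\mathcal{E}_{\#}(X)=\ker(\pi\circ\Phi)$ onto $\Phi(\mathcal{E}(X))\cap N$; hence $\mathcal{E}_{\#}(X)$ embeds into $N=\prod_{j=2}^{k}\Hom\big(\pi_{m_j}(X),H^{m_j}(X^{[m_j-1]})\big)$. Being a finite product of $\Q$-vector spaces, $N$ is a $\Q$-vector space, hence torsion-free as an abelian group, so $\{1\}$ is its only finite subgroup. Since $\mathcal{E}_{\#}(X)$ is finite by hypothesis, we get $\mathcal{E}_{\#}(X)=\{1\}$. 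Therefore $\pi\circ\Phi$ has trivial kernel, i.e.\ it embeds $\mathcal{E}(X)$ into $\K(p_{m_1},\Q)\times\cdots\times\K(p_{m_k},\Q)$, which is the assertion. (If one prefers to assume that $\mathcal{E}(X)$ itself is finite, the proof is unchanged, since the subgroup $\mathcal{E}_{\#}(X)$ is then finite as well.)

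The point demanding care is the identification in the first paragraph, namely that the semidirect-product decomposition of \thmref{t3} is compatible with the homotopy action: in the stagewise assembly of $\Phi$ from the Whitehead-type exact sequence $\cdots\to\Hom(\pi_q(X),\Q)\xrightarrow{b^q}H^{q+1}(X^{[n]})\to H^{q+1}(X)\xrightarrow{h^{q+1}}\Hom(\pi_{q+1}(X),\Q)\to\cdots$, one must check that at level $m_j$ the factor $\K(p_{m_j},\Q)$ indeed records the induced automorphism of $\pi_{m_j}(X)\cong\Hom(\pi_{m_j}(X),\Q)^{\vee}$, while the vector-space factor $\mathcal{L}^{m_j}(X)$ carries only the ``unipotent'' part of the action, i.e.\ the indeterminacy that is invisible on every $\pi_{m_i}(X)$. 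Once this bookkeeping is spelled out, the remainder is the elementary fact that a finite subgroup of a rational vector space is trivial.
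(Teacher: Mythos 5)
Your proof is correct and follows essentially the same route as the paper: embed $\mathcal{E}(X)$ via \thmref{t3} and observe that, since each $\mathcal{L}^{m_j}(X)$ is a $\Q$-vector space and hence torsion-free, the finite group $\mathcal{E}(X)$ must meet the normal factor $N=\prod_{j}\mathcal{L}^{m_j}(X)$ trivially and therefore inject into $\K(p_{m_1},\Q)\times\cdots\times\K(p_{m_k},\Q)$. The one remark is that your ``point demanding care'' --- identifying $\pi\circ\Phi$ with the action on homotopy groups so that $\ker(\pi\circ\Phi)=\mathcal{E}_{\#}(X)$ --- is dispensable: the torsion argument applies verbatim to $\ker(\pi\circ\Phi)=\Phi^{-1}(N)$, whatever that kernel is, because it is finite and embeds in the torsion-free group $N$.
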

We establish our main theorem using standard tools from rational homotopy theory, specifically employing Sullivan's model. For a comprehensive introduction to these techniques, we refer the reader to \cite{FHT}. Below, we recall some of the relevant notation. A minimal Sullivan algebra   \((\Lambda V, \partial)\) consists of a free exterior algebra \(\Lambda V\) over a finite-type graded vector space \(V = (V^{\geq 1})\), together with a differential \(\partial\) of degree 1 that satisfies \(\partial: V \to \Lambda^{\geq 2}V\).

Every topological space \(X\) admits a   Sullivan algebra \((\Lambda V, \partial)\), known as its minimal  Sullivan model, which is unique up to isomorphism and completely determines the rational homotopy type of \(X\). The Sullivan model allows us to recover the homotopy data of \(X\) through the following identifications:
\begin{equation}\label{a2}
	V^* \cong \pi_{*+1}(X), \quad\quad\quad H^{*}(\Lambda V) \cong H^{*}(X).
\end{equation}
The paper is organized as follows.

\noindent Section 2 focuses on   revisiting the concept of homotopy between DGA-maps between Sullivan algebras  and reviewing key properties of the Sullivan  model for a rationally elliptic space. Furthermore, we recall the fundamental definition of the Whitehead exact sequence associated with a Sullivan algebra. In Section 3, we establish the main theorem of the paper and provide illustrative examples that highlight the effectiveness of our results.
\section{Preliminaries}
\subsection{Notion of homotopy for Sullivan algebras}
All vector spaces, algebras, tensor products etc... are   defined
over   $\Bbb Q$ and this ground field will be in general
suppressed from the notation.
\begin{definition}
\label{d1}
Let $(\Lambda V,\partial)$ be a Sullivan algebra. Define the vector spaces $\overline{V}$ and $\widehat{V}$ by $(\overline{V})^{n}=V^{n+1}$ and $(\widehat{V})^{n}=V^{n}$. We then define the free  differential commutative algebra  $\Lambda(V, \overline{V}, \widehat{V}),D)$ where the differential $D$ is given by:
 \begin{equation}\label{8}
 D(v)=\partial(v),\,\,\,\,\,\,\,\,\,\,\,\,\,\,\,\,\,\,\, D(\widehat{v})=0,\,\,\,\,\,\,\,\,\,\,\,\,\,\,\,\,\,\,\, D(\overline{v})=\widehat{v}.
\end{equation}
We define a derivation $S$ of degree -1 of the fcca $\Lambda(V, \overline{V}, \widehat{V}),D)$ by putting $S(v)=\overline{v}$, $S(\overline{v})=S(\widehat{v})=0$.

 A  homotopy    between two cochain morphisms $\alpha, \alpha': (\Lambda(V),\partial)\to (\Lambda(V),\partial)$  is a DGA-map
 $$F \colon   \Lambda(V, \overline{V}, \widehat{V}),D)\to (\Lambda(V),\partial)$$
 such that  $F(v)=\alpha(v)$ and $F\circ e^{\theta}(v)=\alpha'(v)$  where the DGA-map:
 $$e^{\theta}:(\Lambda(V),\partial)\to \Lambda(V, \overline{V}, \widehat{V}),D)$$
 is defined by setting: $$e^{\theta}(v)=v+\widehat{v}+\underset{n\geq 1}{\sum} \frac{1}{n!}(S\circ\partial)^{n}(v), \,\,v\in V\text{\,\,\,\,\ and\,\,\,\, }\theta=D\circ S+S\circ D$$
\end{definition}

\medskip 
The concept of DGA-homotopy allows us to define the group \(\mathcal{E}(\Lambda V,\partial)\), which consists of the self-homotopy equivalence classes of \((\Lambda V,\partial)\), along with its subgroup \(\mathcal{E}_{\#}((\Lambda V,\partial)\), consisting of the self-homotopy equivalence classes  that induce the identity  on the graded vector space of the indecomposables \(V\).

Leveraging the properties of the Sullivan model, we deduce that if \((\Lambda V,\partial)\) models \(X\), then \((\Lambda V^{\leq m-1}, \partial)\) models the \((m-1)\)-section of Postnikov $X^{[m-1]}$ of  \(X\). Moreover, we have:  
\begin{equation}\label{f15}
	\E_{}(X)\cong\E_{}(\Lambda V),\,\,\,\,\,\,\,\,\,\,\,\,\,\,\,\,\,\,\,\,\,\,	\E_{\#}(X)\cong\E_{\#}(\Lambda V).
\end{equation}
$$	\E_{}(X^m)\cong\E_{}(V^{\leq m-1}),\,\,\,\,\,\,\,\,\,\,\,\,\,\,\,\,\,\,\,\,\,\,	\E_{\#}(X^{[m-1]})\cong\E_{\#}(V^{\leq m-1}).$$
Thereafter we will need the following lemma.
\begin{lemma}
\label{l3} Let $q>n$ and let   $V = V^{q}\oplus V^{\leq n}$ and $\alpha,\alpha' \colon   (\Lambda V,\partial) \to (\Lambda V,\partial)$  be two DGA-maps satisfying
$$\alpha(v)=v+z, \,\,\,\,\,\,\,\,\alpha(v)=v+z' \,\text{ on $V^{q}$}  \,\,\,\,
\text{ and }\,\,\,\, \alpha=\alpha'=\mathrm{id}
\,\,\text{ on $V^{\leq n}$}.$$
  Assume that $z-z'=\partial(u)$,
where $u\in \Lambda V$. Then $\alpha$ and $ \alpha'$
are  homotopic.
\end{lemma}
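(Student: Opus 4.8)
The plan is to construct an explicit DGA-homotopy $F \colon (\Lambda(V,\overline V,\widehat V),D) \to (\Lambda V,\partial)$ in the sense of Definition~\ref{d1}, starting from the element $u$ with $\partial(u) = z - z'$. Since $V = V^q \oplus V^{\leq n}$ and both $\alpha,\alpha'$ are the identity on $V^{\leq n}$, the only data to be matched is the behaviour on generators $v \in V^q$: we need $F(v) = \alpha(v) = v + z$ and $(F \circ e^\theta)(v) = \alpha'(v) = v + z'$. First I would record that, because $e^\theta(v) = v + \widehat v + \sum_{n\geq 1}\frac{1}{n!}(S\circ\partial)^n(v)$ and $\partial(v) \in \Lambda^{\geq 2}V$ has all its factors in $V^{\leq n}$ (as $q > n$ and $\Lambda V$ is generated in degrees making $\partial v$ a polynomial in lower generators), the correction terms $(S\circ\partial)^n(v)$ for $n \geq 1$ land in $\Lambda(V^{\leq n},\overline{V^{\leq n}},\widehat{V^{\leq n}})$; on this subalgebra we are free to let $F$ restrict to the standard homotopy witnessing $\alpha = \alpha' = \mathrm{id}$, which on $V^{\leq n}$ is simply $v \mapsto v$, $\widehat v \mapsto 0$, $\overline v \mapsto 0$. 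Hence $(F\circ e^\theta)(v) = F(v) + F(\widehat v) + 0 = (v+z) + F(\widehat v)$ for $v \in V^q$, and the whole problem reduces to defining $F(\widehat v)$ so that $F(\widehat v) = z' - z = -\partial(u)$ while keeping $F$ a chain map.

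The key step is therefore: set $F(\widehat v) = -\partial(u_v)$ (where $u = u_v$ depends on the chosen generator $v$, extended linearly), set $F(\overline v) = -u_v$, and set $F = \mathrm{id}$ on $V^{\leq n}$ together with $F(\widehat w) = F(\overline w) = 0$ for $w \in V^{\leq n}$, and $F(v) = v + z_v$ for $v \in V^q$. Then I would check compatibility with the differentials: $F(D\widehat v) = F(0) = 0$ and $\partial(F\widehat v) = \partial(-\partial u_v) = 0$, consistent; $F(D\overline v) = F(\widehat v) = -\partial u_v$ and $\partial(F \overline v) = \partial(-u_v) = -\partial u_v$, consistent; $F(Dv) = F(\partial v) = \partial v$ (since $\partial v$ is a polynomial in $V^{\leq n}$ on which $F = \mathrm{id}$) $= \partial(v + z_v) = \partial(F v)$ using $\partial z_v = 0$, which holds because $\alpha$ is a chain map so $\partial(v+z_v) = \partial v$ forces $\partial z_v = 0$. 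Extending $F$ multiplicatively to all of $\Lambda(V,\overline V,\widehat V)$ then gives a well-defined DGA-map.

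Finally I would verify the two boundary conditions of Definition~\ref{d1}: $F(v) = v + z_v = \alpha(v)$ by construction, and $(F\circ e^\theta)(v) = F(v) + F(\widehat v) + \sum_{n\geq 1}\frac{1}{n!}F\big((S\circ\partial)^n v\big) = (v+z_v) + (-\partial u_v) + 0 = v + z_v - (z_v - z'_v) = v + z'_v = \alpha'(v)$, where the sum over $n\geq 1$ vanishes because each $(S\circ\partial)^n v$ lies in the subalgebra on which $F$ agrees with the trivial homotopy. On $V^{\leq n}$ both sides equal $v$, so the conditions hold there trivially.

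The main obstacle I anticipate is the bookkeeping in the reduction of the first paragraph: one must be sure that $(S\circ\partial)^n(v)$ for $n \geq 1$ genuinely involves only the lower generators $V^{\leq n}$ and their bars/hats, so that the choice of $F$ there is forced and contributes nothing new to $(F\circ e^\theta)(v)$ beyond the term $F(\widehat v)$. This uses crucially the hypothesis $q > n$: the differential $\partial$ lowers nothing into $V^q$ since $\partial(V^{\leq n}) \subseteq \Lambda^{\geq 2}V^{\leq n-1}$ and $S$ only replaces a $V^{\leq n}$-factor by its bar, so no $\overline v$ or $\widehat v$ with $v \in V^q$ can ever appear in those iterated terms. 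Once this is pinned down the rest is the routine chain-map check sketched above.
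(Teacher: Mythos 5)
Your proof is correct and follows essentially the same strategy as the paper: an explicit homotopy $F$ is written down on generators and checked to be a chain map satisfying the two boundary conditions of Definition~\ref{d1}. In fact your choice $F(\overline{v})=-u$ (so that $\partial F(\overline{v})=-\partial(u)=z'-z=F(\widehat{v})=F(D\overline{v})$) is the correct one, whereas the paper's displayed formula $F(\overline{v})=z$ appears to be a typo, since $\partial(z)=0\neq z'-z$ in general; your verification that the iterated terms $(S\circ D)^{n}(v)$ land in the subalgebra generated by $V^{\leq n}$ and its bars, and hence are killed by $F$, supplies exactly the detail the paper omits.
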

\begin{proof}
Define $F$ by setting:
\begin{eqnarray}
\label{9}
  F(v) &=& v+y, \,\,\,\,\,F(\widehat{v})=z'-z \hbox{\ \ and \ \ }F(\overline{v})=z \hbox{\ \ for \ \ } v \in V^{q}\nonumber \\
  F(v) &=& v, \,\,\,\,\,\,\,\,\,\,\,\,\,\,\,\,F(\widehat{v})=0 \,\,\,\,\,\,\,\,\,\,\,\,\hbox{\ \ and \ \ }F(\overline{v})=0 \hbox{\ \ for \ \ } v \in V^{\leq n}\nonumber.
\end{eqnarray}
then   $F$ is the needed homotopy.
\end{proof}
\begin{lemma}
	\label{l1}
 Let   $\alpha,\beta \colon   (\Lambda V^{\leq n},\partial) \to (\Lambda V^{\leq n},\partial)$  be two DGA-maps such that 
\begin{eqnarray}\label{G1}
	\alpha(v)&=&\beta(v)+y,\,\,\,\,\, v\in V^{n},\,\,\,\,\,\, y\in \Lambda V^{\leq n-1},\nonumber\\
	\alpha&\simeq&\beta,\,\,\,\,\,\,\,\,\,\,\,\,\,\mathrm{ 	on }\,\, \,\Lambda V^{\leq n-1}\nonumber.
\end{eqnarray}
There is a cycle $y'\in \Lambda V^{\leq n-1}$ of degree $n$  such that  $\alpha$ is homotopic to the following DGA-map
\begin{eqnarray}\label{g2}
	\alpha'(w)&=&\beta(w)+y',\,\,\,\,\, v\in V^{n},\nonumber\\
	\alpha'&=&\beta,\,\,\,\,\,\,\,\,\,\,\,\,\,\mathrm{ 	on }\,\, \,\Lambda V^{\leq n-1}.
\end{eqnarray}
\end{lemma}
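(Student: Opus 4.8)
The plan is to upgrade the given homotopy on $\Lambda V^{\leq n-1}$ to a homotopy on all of $\Lambda V^{\leq n}$, thereby replacing $\alpha$ by a homotopic DGA-map that agrees \emph{strictly} with $\beta$ on $\Lambda V^{\leq n-1}$; the correction term of this new map on $V^{n}$ will be the sought cocycle. Concretely, I would first fix a DGA-homotopy $H\colon(\Lambda(W,\overline W,\widehat W),D)\to(\Lambda W,\partial)$, with $W=V^{\leq n-1}$, realizing $\alpha\simeq\beta$ on $\Lambda W$, so that $H(w)=\alpha(w)$ and $(H\circ e^{\theta})(w)=\beta(w)$ for $w\in W$. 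Then I would extend $H$ to $\widetilde H\colon(\Lambda(V^{\leq n},\overline{V^{\leq n}},\widehat{V^{\leq n}}),D)\to(\Lambda V^{\leq n},\partial)$ by declaring, for $v\in V^{n}$, that $\widetilde H(v)=\alpha(v)$ and $\widetilde H(\widehat v)=\widetilde H(\overline v)=0$. The only point requiring verification is that $\widetilde H$ be a cochain map, i.e. $\widetilde H(Dv)=\partial(\widetilde H v)$ for $v\in V^{n}$; this holds because $Dv=\partial v$, because $\widetilde H$ restricts to $\alpha$ both on $\Lambda W$ and on $V^{n}$, and because $\alpha$ is a DGA-map, whence $\widetilde H(\partial v)=\alpha(\partial v)=\partial(\alpha v)$. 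Thus $\widetilde H$ is a homotopy from $\alpha$ to the DGA-map $\alpha':=\widetilde H\circ e^{\theta}$, and since $e^{\theta}$ maps $\Lambda W$ into the sub-fcca $\Lambda(W,\overline W,\widehat W)$ — on which $\widetilde H$ coincides with $H$ — one gets $\alpha'=\beta$ on $\Lambda V^{\leq n-1}$.

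Next, for $v\in V^{n}$ set $y':=\alpha'(v)-\beta(v)$. Homotopic DGA-maps have equal linear parts, and since $\alpha'\simeq\alpha$ while $\alpha(v)-\beta(v)\in\Lambda V^{\leq n-1}$ has no linear $V^{n}$-component, the linear parts of $\alpha'$ and of $\beta$ coincide on $V^{n}$, so $y'$ has no linear $V^{n}$-summand. As every monomial of degree $n$ involving $V^{n}$ is linear, this forces $y'\in\Lambda V^{\leq n-1}$, of degree $n$. It then remains to show $\partial y'=0$. Applying $\partial$ gives $\partial y'=\alpha'(\partial v)-\beta(\partial v)$; writing $\partial v=p+\sum_i c_i\,v_i u_i$ with $p\in\Lambda V^{\leq n-1}$, $v_i\in V^{n}$ and $u_i\in V^{1}$ (the only shapes allowed by minimality and the degree), and using $\alpha'=\beta$ on $\Lambda V^{\leq n-1}$, this collapses to $\partial y'=\sum_i c_i\,(\alpha'(v_i)-\beta(v_i))\,\beta(u_i)$. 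Ordering a basis of $V^{n}$ compatibly with the filtration of $(\Lambda V,\partial)$ by $\partial$-length — so that $\partial v_{1}\in\Lambda V^{\leq n-1}$ and every $v_i$ occurring above is strictly lower than $v$ — I would argue by induction, using $\partial^{2}=0$ at each stage together with the freedom to alter $\alpha'$ on lower generators by coboundaries, that each $\alpha'(v_i)-\beta(v_i)$ may be taken $\partial$-closed and that the displayed sum vanishes. Then $\partial y'=0$, and $\alpha'$ is the required map.

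I expect this last step to be the main obstacle. It is \emph{not} true in general that $\partial(V^{n})\subseteq\Lambda V^{\leq n-1}$ — a degree-one generator may pair with a degree-$n$ one — so one cannot simply say that $\alpha'$ and $\beta$ agree on $\partial v$; the vanishing of the correction cocycle really uses minimality of $(\Lambda V,\partial)$ and the $\partial$-length filtration on $V^{n}$, and setting up the induction so that the coboundary freedom on lower generators is used coherently is the technical heart of the argument. By contrast, the extension of the homotopy in the first step is essentially routine, the only care being to keep $\widetilde H$ a DGA-map with the two prescribed endpoints.
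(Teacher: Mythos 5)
Your construction in the first paragraph is sound and is essentially the paper's argument in a slightly cleaner package: the paper also takes the given homotopy $F$ on $\Lambda V^{\leq n-1}$, sets $y'=y-F\bigl(\sum_{k\geq 1}\tfrac{1}{k!}(S\circ D)^{k}(v)\bigr)$ (which is exactly your $\alpha'(v)-\beta(v)$), and then writes down an explicit homotopy $G$ between $\alpha$ and $\alpha'$; your observation that the extended map $\widetilde H$ is itself that homotopy, with $\alpha'=\widetilde H\circ e^{\theta}$, saves the separate verification of $G$. (Your detour through ``equal linear parts'' to see $y'\in\Lambda V^{\leq n-1}$ is also unnecessary: $\alpha'(v)-\alpha(v)=\widetilde H\bigl(\sum_{k\geq1}\tfrac{1}{k!}(S\circ D)^{k}(v)\bigr)$ visibly lies in $\Lambda V^{\leq n-1}$ because $Dv=\partial v$ does, and $\alpha(v)-\beta(v)=y$ does by hypothesis.)

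The genuine problem is your last step. The claim $\partial y'=0$ is the entire content of the lemma, and you do not prove it: you replace it with a sketched induction over a ``$\partial$-length filtration'' in which you propose to ``alter $\alpha'$ on lower generators by coboundaries,'' and you yourself flag this as the unresolved technical heart. That induction is never set up, and altering $\alpha'$ on other generators would in any case risk destroying the property $\alpha'=\beta$ on $\Lambda V^{\leq n-1}$ that you just arranged. Moreover the difficulty you are guarding against does not occur here: throughout the paper $V$ models a simply connected space, so $V^{1}=0$, and a word of length $\geq 2$ in generators of degree $\geq 2$ with total degree $n+1$ has every factor of degree $\leq n-1$; hence $\partial(V^{n})\subseteq\Lambda V^{\leq n-1}$ automatically (the paper's proof asserts exactly this). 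Granting that, your own identity $\partial y'=\alpha'(\partial v)-\beta(\partial v)$ finishes the proof in one line, since $\alpha'=\beta$ on $\Lambda V^{\leq n-1}$. So the argument is salvageable, but as written the decisive step is missing, and the machinery you propose in its place is both uncarried-out and superfluous in the paper's setting.
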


\begin{proof}
		Since $\alpha$ and $\beta$ are homotopic on $\Lambda V^{\leq n-1}$, there exits a homotopy  
		$$F \colon   \Lambda(V^{\leq n-1}, \overline{V}^{\leq n-1}, \widehat{V}^{\leq n-1}),D)\to (\Lambda V^{\leq n-1},\partial)$$
	such that
	\begin{equation}\label{gg1}
		F(v)=\beta(v),\,\,\,\,\,\,\,\,\,\,\,F\circ e^{\theta}(v)=\alpha(v),\,\,\,\,\,\,\,\,\,\,\,\forall v\in V^{\leq n-1}.
	\end{equation}
	Therefore for $v\in V^{n}$, the element $F\Big(\underset{n\geq 1}{\sum} \frac{1}{n!}(S\circ D)^{n}(v)\Big)$ is a well-defined decomposable element in $\Lambda V^{\leq n-1}$ of degree $n$. Thus, we set:
	\begin{equation}\label{3}
		y'=y-F\Big(\underset{n\geq 1}{\sum} \frac{1}{n!}(S\circ D)^{n}(v)\Big).
	\end{equation}
	Now, by hypothesis we have
	\begin{equation}\label{4}
		\partial(\beta(v))+\partial(y)=	\partial\alpha(v)=\alpha_{}(\partial(v))=F\circ e^{\theta}(\partial(v)).
	\end{equation}
	But $e^{\theta}$ is a DGA-map, so
	\begin{eqnarray}\label{5}
		F\circ e^{\theta}(\partial(v))&=&F\circ D (e^{\theta}(v))=F\circ D\Big(v+\widehat{v}+\underset{n\geq 1}{\sum} \frac{1}{n!}(S\circ D)^{n}(v)\Big)\nonumber\\
		&=&F(D(v))+F(D(\widehat{v}))+F\circ D\Big(\underset{n\geq 1}{\sum}\frac{1}{n!}(S\circ D)^{n}(v)\Big)\nonumber\\
		&=&F(\partial(v))+\partial\circ F\Big(\underset{n\geq 1}{\sum}\frac{1}{n!}(S\circ D)^{n}(v)\Big)\nonumber\\
		&=&\beta(\partial(v))+\partial\Big(\underset{n\geq 1}{\sum}\frac{1}{n!}F(S\circ D)^{n}(v)\Big)\nonumber\\
		&=&\partial(\beta(v))+\partial\Big(\underset{n\geq 1}{\sum}\frac{1}{n!}F(S\circ D)^{n}(v)\Big).
	\end{eqnarray}

Here, we use the following facts:  
\begin{itemize}  
	\item By (\ref{8}), we have \( D(\widehat{v}) = 0 \).  
	\item The identity \( F \circ D = \partial \circ F \) holds.  
	\item Since \( \partial(v) \in \Lambda V^{\leq n-1} \) and \( F = \beta \) on \( V^{\leq n-1} \) (by (\ref{gg1})), it follows that  
	\[
	F(\partial(v)) = \beta(\partial(v)).
	\]  
\end{itemize}

Comparing (\ref{4}) and (\ref{5}), we obtain  
\begin{equation*}  
	\partial(y) = \partial\left( \sum_{n\geq 1} \frac{1}{n!} F(S \circ D)^{n}(v) \right).  
\end{equation*}  
By (\ref{3}), this implies that \( \partial(y') = 0 \).  

Let us now prove that $\alpha'$	and  $\alpha$  are homotopic, where $\alpha'$ is given in (\ref{g2}). Indeed,  define  $G \colon   \Lambda(V^{\leq n}, \overline{V}^{\leq n}, \widehat{V}^{\leq n}),D)\to (\Lambda V^{\leq n},\partial)$  by setting
	\begin{eqnarray}
		\label{f16}
		G(v) \hspace{-2mm}&=&\hspace{-2mm} \beta(v),\,\,\,\,\,\,\,\,\,\,\,\,\,\,\,\,\,G(\hat{v})=G(\bar{v})=0,\,\,\,\,\,\,\,\,\,\hbox{ for  } v \in V^{n}, \nonumber\\
		G\hspace{-2mm}&=&\hspace{-2mm} F, \,\,\,\,\,\,\,\,\,\,\,\,\,\,\,\,\,\,\,\,\,\,\,\,\, \hbox{ on   }   \Lambda V^{\leq n-1}\nonumber.
	\end{eqnarray}
	Consider the relations in  (\ref{8}). A straightforward computation leads to the following results:
	\[
	\partial(G(v)) = \partial(\beta(v)), \quad G(D(v)) = G(\partial(v)).
	\]
	Since \( \partial(v) \in \Lambda V^{\leq n-1} \), it follows that:
	\[
	G(\partial(v)) = F(\partial(v)).
	\]
	From relations (\ref{g2}) and (\ref{gg1}), we know that \( F(\partial(v)) = \beta(\partial(v)) \). Therefore, we can conclude:
	\[
	\partial(G(v)) = G(D(v)).
	\]
	 Moreover, by considering relation (\ref{8}), we find:  
	\[
	\partial(G(\hat{v})) = G(D(\hat{v})) = 0, \quad \partial(G(\hat{v})) = 0\implies \partial(G(\hat{v}))=\partial(G(\hat{v})).
	\]  
		\[ G(D(\bar{v})) = G(\hat{v}) = 0,\quad \partial(G(\bar{v}))=0\implies G(D(\bar{v}))=\partial(G(\bar{v})).
	\]  
	This shows that the map \(G\) satisfies the necessary conditions for being a DGA-homotopy. Furthermore, we have  
	\[
	G \circ e^{\theta}(v) = G\Big(v + \hat{v} + \sum_{n \geq 1} \frac{1}{n!}(S \circ D)^{n}(v)\Big) = G(v) +G(\hat{v}) + G\Big(\sum_{n \geq 1} \frac{1}{n!}(S \circ D)^{n}(w)\Big).
	\]  
	As \(\sum_{n \geq 1} \frac{1}{n!}(S \circ D)^{n}(w) \in \Lambda V^{\leq n-1}\) and \(F = G\) on \(V^{\leq n-1}\), it follows that  
	\[
	G \circ e^{\theta}(w) = \alpha'(v) + F\Big(\sum_{n \geq 1} \frac{1}{n!}(S \circ D)^{n}(v)\Big) = (\beta(v) + y') + (y - y') = \alpha(v).
	\]  
	Here we use (\ref{G1}) and (\ref{3}). Consequently, \(\alpha\) and \(\alpha'\) are homotopic.
\end{proof}
\begin{remark}\label{r13}
If  $\alpha,\beta \colon   (\Lambda V^{\leq n},\partial) \to (\Lambda V^{\leq n},\partial)$  are two homotopic DGA-maps, then their restrictions $\alpha_{\leq n-1},\beta_{\leq n-1} \colon   (\Lambda V^{\leq n-1},\partial) \to (\Lambda V^{\leq n-1},\partial)$ are also homotopic. Indeed, since $\alpha,\beta$ are homotopic, there exits a homotopy  
$$F \colon   \Lambda(V^{\leq n}, \overline{V}^{\leq n}, \widehat{V}^{\leq n}),D)\to (\Lambda V^{\leq n},\partial)$$
such that
\begin{equation*}\label{g1}
	F(v)=\beta(v),\,\,\,\,\,\,\,\,\,\,\,F\circ e^{\theta}(v)=\alpha(v),\,\,\,\,\,\,\,\,\,\,\,\forall v\in V^{\leq n}.
\end{equation*}
Now if we consider the DGA-map:
$$G \colon   \Lambda(V^{\leq n-1}, \overline{V}^{\leq n-1}, \widehat{V}^{\leq n-1}),D)\to (\Lambda V^{\leq n-1},\partial)$$
which is the restriction of $F$, then clearly $G$ is DGA-homotopy between $\alpha_{\leq n-1},\beta_{\leq n-1}.$
\end{remark}
\subsection{Whitehead exact sequence of a Sullivan algebra} (see  \cite{B5} for more details.)
Let $(\Lambda V,\partial)$ be a minimal Sullivan algebra. For every $n\geq 2$, we   define the  linear map:
\begin{equation}\label{v6}
b^{n}: V^{n}\overset{}{\longrightarrow} H^{n+1}(\Lambda V^{\leq n-1}),\,\,\,\,\,\,\,\,\,\,\,b_n(v)=\{\partial(v)\}
\end{equation}
where $\{\partial(w)\}$  denotes
the homology class of $\partial(v)$ in 
$\Lambda V^{\leq n-1}$. 
\medskip

To  every   minimal Sullivan algebra $(\Lambda V,\partial)$,  we can assign  the following long  exact   sequence
\begin{equation*}\label{02}
	\cdots\to V^{n}\overset{b^{n}}{\longrightarrow} H^{n+1}(\Lambda V^{\leq n-1})\to H^{n+1}(\Lambda V)\to V^{n+1}\overset{b^{n+1}}{\longrightarrow}\cdots
\end{equation*}
called the Whitehead exact sequence of $(\Lambda V,\partial)$.
\begin{remark}\label{r12}
	It is well-known that the Whitehead exact sequence of \((\Lambda V,\partial)\) is functional. This means that for any \([\beta] \in \mathcal{E}(\Lambda V)\), the following commutative diagram holds.
	\begin{equation*}\label{a3}
		\begin{tikzcd}
			\cdots \arrow[r] &V^{n} \arrow[r, "b^{n}"] \arrow[d, "\tilde{\beta}^{n}"'] 
			& H^{n+1}(\Lambda V^{\leq n-1}) \arrow[r] \arrow[d, "H^n(\beta^{\leq n-1})"]
			& H^{n+1}(\Lambda V)\arrow[r] \arrow[d, "H^n(\beta)"]
			& V^{n+1} \arrow[r, "b^{n+1}"] \arrow[d, "\tilde{\beta}^{n+1}"]
			& \cdots \\
			\cdots \arrow[r] & V^{n} \arrow[r, "b^{n}"] 
			&H^{n+1}(\Lambda V^{\leq n-1}) \arrow[r]  
			& H^{n+1}(\Lambda V) \arrow[r] 
			& V_{n} \arrow[r, "b^{n+1}"] 
			& \cdots 
		\end{tikzcd}
	\end{equation*}
	where the graded automorphism $\tilde{\beta}:V\to V$ is induced by the DGA-map  $\beta$ on the graded vector of the indecomposables $V$ and where $\beta^{\leq n-1}$ is the restriction of $\beta$ to the sub-DGA $\Lambda V^{\leq n-1}$.
\end{remark}
\subsection{Elliptic spaces}
Recall that 
a    space $X$ is called  elliptic if  $\dim\,\pi_{*}(X)<\infty$ and $\dim\,H^{*}(X)<\infty$ (\cite{FHT}, \S32).  
The following result mentions some important properties of   elliptic spaces (\cite{FHT} Proposition  $32.6$ and $32.10$).
\begin{proposition}\label{t13}
	If $(\Lambda V,\partial)$ is the Quillen model of an elliptic space of formal dimension $m$, then 
	%$$W_{m-1}=0,\,\,\,\,\,\,\,\,\,\,\,\,\,W_{m}=\Q,\,\,\,\,\,\,\,\,\,\,\,\,\,,\,\,\,\,\,\,\,\,\,\,\,\,\,W_{i}=0,\,\,\, \forall i\geq m+1.$$
	%Moreover, $X$ satisfies Poincaré duality, implying
	%$$W_{i}\cong W_{m-1-i},\,\,\,\,\,\,\,\,\,\,\,\,\,\,\,\,\,\,\,\, \forall i< m-1.$$
	\begin{enumerate}
		\item $H^{m-1}(\Lambda V)=0$, $\dim\,H^{m}(\Lambda V)=1$.
		%	\item $\dim\,H_{\text {even }}(\L(W)) \geq H_{\text {odd }}(\L(W)).$
		\item $V^{i}=0$, for $i \geq 2m+2.$
		%	\item $H_{i-1}(\L(W))=0$, for $i>m+1$ and $i$ is even.
		\item  $\dim\,V^{m}\leq 1.$	
		%\item There is only one non-trivial vector space 
		%	$H_{i-1}(\L(W))$, for $i>m$ and $i$ must be odd. Necessary, we have $\dim\,H_{i-1}(\L(W))=1.$
		\item The Poincar\'{e} duality implies $H^{i}(\Lambda V)\cong H^{m-i}(\Lambda V)$ for all  $ i.$
		%\item $\sum_{i\geq 1}^{}(2i+1)\,\dim\,H_{2i}(\L(W))\leq 2M-1.$
		%	\item $\dim\,H_{2i-1}(\L(W))\leq M.$
		%	\item $\sum_{i\geq 1}^{}(2i+1)\,\dim\,H_{2i}(\L(W))-\sum_{i\geq 1}^{}(2i)\,(\dim\,H_{2i-1}(\L(W))-1)= p.$
	\end{enumerate}
\end{proposition}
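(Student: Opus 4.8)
The plan is to regard the four assertions as the standard structural consequences of ellipticity and to organise the argument around item (4), Poincaré duality, which is by far the deepest; item (1) is then immediate, and items (2)--(3) are degree bookkeeping. Throughout I write $V=V^{\mathrm{even}}\oplus V^{\mathrm{odd}}$, I use the identification $V^{*}\cong\pi_{*+1}(X)$ from (\ref{a2}), and I use ellipticity in the form $\dim V<\infty$ and $\dim H^{*}(\Lambda V)<\infty$, together with the fact that $H^{i}(\Lambda V)=0$ for $i>m$ and $H^{m}(\Lambda V)\neq 0$ by the definition of the formal dimension $m$.

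For item (4) my plan is to reduce Poincaré duality to the \emph{pure} case. First I would introduce the pure differential $\partial_{\sigma}$, defined by $\partial_{\sigma}=0$ on $V^{\mathrm{even}}$ and $\partial_{\sigma}v=(\text{the }\Lambda V^{\mathrm{even}}\text{-component of }\partial v)$ for $v\in V^{\mathrm{odd}}$; this produces a pure Sullivan algebra $(\Lambda V,\partial_{\sigma})$ with the same underlying free algebra. Filtering $(\Lambda V,\partial)$ by word length in the odd generators (so that $\partial$ drops the filtration degree by one) realises $(\Lambda V,\partial_{\sigma})$ as the associated graded object and yields a spectral sequence with first page $E_{1}\cong H^{*}(\Lambda V,\partial_{\sigma})$ converging to $H^{*}(\Lambda V,\partial)$. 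I would check that ellipticity is preserved, so that $(\Lambda V,\partial_{\sigma})$ is again elliptic of formal dimension $m$. For a pure elliptic algebra the problem becomes one of commutative algebra: $H^{*}(\Lambda V,\partial_{\sigma})$ is the cohomology of a Koszul-type complex over the polynomial algebra $\Lambda V^{\mathrm{even}}$ modulo the regular sequence coming from the $\partial_{\sigma}$-images of the odd generators, hence a finite-dimensional complete intersection and therefore Gorenstein, which gives Poincaré duality on the first page. The final step is to transfer the duality to $H^{*}(\Lambda V,\partial)$: since the two formal dimensions agree and the cup-product pairing is compatible with the filtration, the top class survives to $E_{\infty}$ and the induced pairing $H^{i}(\Lambda V)\otimes H^{m-i}(\Lambda V)\to H^{m}(\Lambda V)\cong\mathbb{Q}$ stays nondegenerate, giving $H^{i}(\Lambda V)\cong H^{m-i}(\Lambda V)$.

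Granting (4), item (1) follows at once: connectivity gives $H^{0}(\Lambda V)\cong\mathbb{Q}$, so $\dim H^{m}(\Lambda V)=\dim H^{0}(\Lambda V)=1$, while simple connectivity gives $H^{1}(\Lambda V)=0$, whence $H^{m-1}(\Lambda V)\cong H^{1}(\Lambda V)=0$. For items (2) and (3) I would pass to the pure model and bound generator degrees individually. Each even generator $x$ is a $\partial_{\sigma}$-cocycle that is indecomposable, hence (by minimality, $\partial_{\sigma}x\in\Lambda^{\geq 2}V$) not a coboundary, so it survives to $H^{\deg x}(\Lambda V,\partial_{\sigma})\neq 0$, forcing $\deg x\leq m$. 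Each odd generator $y$ satisfies $\deg\partial_{\sigma}y=\deg y+1$, and a socle-degree computation for the complete intersection bounds $\deg y$ in terms of $m$, giving $\deg y\leq 2m-1$. Combining the two bounds yields $V^{i}=0$ for $i\geq 2m$, which in particular gives the stated vanishing $V^{i}=0$ for $i\geq 2m+2$. Finally, for (3) I would analyse the generators of top degree $m$: using $\dim H^{m}(\Lambda V)=1$ from (1) together with Poincaré duality and the degree formula $m=\sum_{v\in V^{\mathrm{odd}}}\deg v-\sum_{v\in V^{\mathrm{even}}}(\deg v-1)$, at most one generator can sit in degree exactly $m$, so $\dim V^{m}\leq 1$.

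The main obstacle is unquestionably item (4). The reduction to the pure model and the Gorenstein property of a finite-dimensional complete intersection are clean, but the delicate point is the \emph{transfer} of Poincaré duality through the odd spectral sequence: one must verify that the top-degree class is not destroyed by higher differentials and that the pairing remains perfect at $E_{\infty}$, which rests on the equality of formal dimensions of $(\Lambda V,\partial)$ and $(\Lambda V,\partial_{\sigma})$ and on the multiplicativity of the spectral sequence. Item (3) is the second subtle point, since the naive reading of the Whitehead exact sequence (\ref{v6}) only bounds $\dim V^{m}$ by $1+\dim H^{m+1}(\Lambda V^{\leq m-1})$, so one genuinely needs the degree formula rather than exactness alone. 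These are precisely the arguments carried out in \cite{FHT}, §32.
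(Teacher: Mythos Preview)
The paper does not give its own proof of this proposition; it simply quotes \cite{FHT}, Propositions~32.6 and~32.10, and states the result. Your outline is exactly the route taken in that reference (pass to the pure differential $\partial_{\sigma}$ via the odd-word-length filtration, obtain Poincar\'e duality from the Gorenstein property of the complete intersection $\Lambda V^{\mathrm{even}}/(\partial_{\sigma}y_{j})$, then transfer back through the multiplicative spectral sequence), so in effect there is nothing to compare beyond your sketch and the cited source, and on items~(1), (2), (4) they agree.

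The one genuine soft spot is item~(3). For $m$ even your idea works: two linearly independent generators in $V^{m}$ are $\partial_{\sigma}$-cocycles and, by minimality, not $\partial_{\sigma}$-coboundaries, so they produce two independent classes in $H^{m}(\Lambda V,\partial_{\sigma})$, contradicting $\dim H^{m}=1$. For $m$ odd, however, the degree formula $m=\sum_{j} b_{j}-\sum_{i}(a_{i}-1)$ by itself is not enough. Two odd generators of degree $m$ force $\sum_{j} b_{j}\geq 2m$, hence $\sum_{i} a_{i}\geq m+q$ with $q=\dim V^{\mathrm{even}}$, and the \emph{individual} bounds $a_{i}\leq m$ you derived for item~(2) do not contradict this once $q\geq 2$. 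What is actually needed is the \emph{sum} inequality $\sum_{i} a_{i}\leq m$ of \cite{FHT}, Proposition~32.6(ii), proved by showing that the product $\prod_{i} x_{i}$ of all even generators represents a nonzero class in $H^{*}(\Lambda V,\partial_{\sigma})$; combined with the degree formula this gives $\sum_{j} b_{j}\leq 2m-1$, and then $\dim V^{m}\leq 1$ follows immediately by the parity of $m$. So the missing ingredient in your sketch of~(3) is specifically this sum bound, not the degree formula you invoke.
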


\section{Short exact sequences associated with   $\E(X)$ and $\E_{\#}(X)$.}
For  a topological space $X$,  various authors  including Arkowitz \cite{A} and Benkhalifa \cite{B2, B5} have developed methods to compute the groups \( \mathcal{E}(X) \) and \( \mathcal{E}_{\#}(X) \), with a focus on techniques from rational homotopy theory, including Sullivan and Quillen models. These approaches provide valuable tools for understanding their structure. For further details, see \cite{B4, B5}, where these groups are extensively studied.

\begin{definition}
	\label{d4}
	Let  $(\Lambda V,\partial)$ be a  Sullivan algebra. For every $n$, define $\D^{n}$ as the subset of $\aut(V^{n})\times \mathcal{E}(\Lambda V^{\leq n-1})$ consisting of pairs $(\rho,[\gamma])$ that satisfy the commutativity of the following diagram.
	\begin{equation}\label{z1}
	\begin{tikzcd}
		& V^{n} \arrow[r, "\rho"] \arrow[d, "b^{n}"] & V^{n} \arrow[d, "b^{n}"] \\
		&	H^{n+1}(\Lambda V^{\leq n-1}) \arrow[r, "H^{n+1}(\gamma)"]  & H^{n+1}(\Lambda V^{\leq n-1})
	\end{tikzcd}
	\end{equation}
Define \( \D_{\#}^{n} \) as the subset of \( \mathcal{E}_{\#} (\Lambda V^{\leq n-1})\) consisting of elements \( [\gamma] \) satisfying \begin{equation}\label{v3}
	b^n = H^{n+1}(\gamma) \circ b^n. 
\end{equation}
\end{definition}
% Clearly,  $\C^{k}$ is a subgroup of  $\aut(W_{k})\times \mathcal{E}^{k-1}$ and    $\C_*^{k}$ is a subgroup of $\mathcal{E}^{k-1}_{*}$ which is also a  subgroup of $\mathcal{E}_*(\L(W_{\leq k-1}))$ by  Remark \ref{r15}.
\begin{remark}\label{r14}
	We have the following  facts:
	\begin{enumerate}
		\item \( \D_{}^{n} \) is  a subgroup of $\aut(V^{n})\times \mathcal{E}(\Lambda V^{\leq n-1})$.
		\item \( \D_{\#}^{n} \) is  a subgroup of $ \mathcal{E}_{\#}(\Lambda V^{\leq n-1})$.
		\item  \( \D_{\#}^{n} \) can be viewed as the subgroup of $\D_{}^{n} $ formed with pairs of the form $(id,[\gamma])$.
		\item If \( b^n \) is an isomorphism, then  \( \D_{}^{n} \) and \( \mathcal{E} (\Lambda V^{\leq n-1})\) are isomorphic as well as \( \D_{\#}^{n} \) and \( \mathcal{E} _{\#}(\Lambda V^{\leq n-1})\). The isomorphism is given by:
		\[
		\mathcal{E} (\Lambda V^{\leq n-1}) \longrightarrow \D_{}^{n}, \quad [\gamma] \mapsto \left( (b_n)^{-1} \circ H_{n-1}(\gamma) \circ b_n, [\gamma_{\leq n-1}] \right).
		\]
		\[\mathcal{E}_{\#}(\Lambda V^{\leq n-1})\longrightarrow \D_{\#}^{n}, \,\,\,\,\,\,\, \,\,\,\,\,\,\, \quad [\gamma] \mapsto[\gamma_{\leq n-1}].
		\]
		\item If \( b^n=0 \), then any  pairs $(\rho,[\gamma])$ in  \( \aut(V^{n})\times \mathcal{E}(\Lambda V^{\leq n-1})\) makes the diagram  (\ref{z1}) commutes and any $[\gamma]\in \mathcal{E}_{\#}(\Lambda V^{\leq n-1})$ satisfies the relation (\ref{v3}). As a result, 	\( \D_{}^{n}\cong \aut(V^{n})\times \mathcal{E}(\Lambda V^{\leq n-1}) \) and \( \D_{\#}^{n}\cong  \mathcal{E}_{\#}(\Lambda V^{\leq n-1}) \).
	\end{enumerate}
\end{remark}
Thus, Remarks \ref{r13} and \ref{r12} allow us  to define a  map $\Psi_n:	\mathcal{E} (\Lambda V^{\leq n})\to \D^{n}$ by setting:
\begin{equation*}\label{z3}
	\Psi_n([\beta])= (\tilde{\beta},[\beta_{\leq n-1}]).
\end{equation*}
\begin{theorem}\label{m1}
	Let  $(\Lambda V^{\leq m},\partial)$ be a Sullivan algebra. For every $n\leq m$,  there exist two split 
	short exact sequences of groups
	\begin{equation}\label{z2}
		\mathrm{Hom}\big(V^{n}, H^{n}(\Lambda V^{\leq n-1})\big) \rightarrowtail
		\mathcal{E} (\Lambda V^{\leq n})\overset{\Psi_{n}}{\twoheadrightarrow}\D^{n}
	\end{equation}
\begin{equation}\label{v5}
	\mathrm{Hom}\big(V^{n}, H^{n}(\Lambda V^{\leq n-1}\big) \rightarrowtail
\mathcal{E}_{\#} (\Lambda V^{\leq n})\overset{\Psi_n}{\twoheadrightarrow}\D^{k}_{\#}.
\end{equation}
\end{theorem}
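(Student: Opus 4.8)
The plan is to construct the map $\Psi_n$ explicitly, identify its kernel with $\mathrm{Hom}\big(V^{n}, H^{n}(\Lambda V^{\leq n-1})\big)$, and exhibit a set-theoretic splitting that is in fact a group homomorphism. First I would verify that $\Psi_n$ is a well-defined group homomorphism: given $[\beta]\in\mathcal{E}(\Lambda V^{\leq n})$, Remark~\ref{r13} guarantees that the restriction $[\beta_{\leq n-1}]$ is a well-defined element of $\mathcal{E}(\Lambda V^{\leq n-1})$, and Remark~\ref{r12} (functoriality of the Whitehead sequence) forces the pair $(\tilde\beta,[\beta_{\leq n-1}])$ to make diagram~(\ref{z1}) commute, hence to lie in $\D^n$; multiplicativity is immediate from functoriality of restriction and of the induced map on indecomposables. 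For surjectivity, given $(\rho,[\gamma])\in\D^n$ I would lift $\gamma$ to a DGA-map on $\Lambda V^{\leq n-1}$ and extend it to $\Lambda V^{\leq n}$ by setting $\beta(v)=\rho(v)+(\text{correction})$ on generators $v\in V^n$: the commutativity of~(\ref{z1}) says precisely that $\partial\big(\gamma\rho(v)\big)$ and $\partial\big(\gamma(v)\big)$... more carefully, that $H^{n+1}(\gamma)\circ b^n=b^n\circ\rho$, i.e. $\{\gamma(\partial v)\}=\{\partial(\rho(v))\}$ in $H^{n+1}(\Lambda V^{\leq n-1})$, so $\gamma(\partial v)-\partial(\rho(v))$ is a coboundary $\partial(u_v)$; defining $\beta(v)=\rho(v)+u_v$ on $V^n$ and $\beta=\gamma$ on $\Lambda V^{\leq n-1}$ gives a DGA-map with $\Psi_n([\beta])=(\rho,[\gamma])$, and one checks it is a homotopy equivalence since $\rho$ and $\gamma$ are.

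Next I would compute the kernel. If $\Psi_n([\beta])$ is the identity, then $\beta\simeq\mathrm{id}$ on $\Lambda V^{\leq n-1}$ and $\tilde\beta=\mathrm{id}$ on $V^n$, so $\beta(v)=v+y_v$ with $y_v\in\Lambda V^{\leq n-1}$ of degree $n$ for each $v\in V^n$. Applying \lemref{l1} (with $\beta$ there taken to be the identity) replaces $\beta$ up to homotopy by a map $\beta'$ with $\beta'(v)=v+y'_v$ where $y'_v$ is now a \emph{cocycle} of degree $n$ in $\Lambda V^{\leq n-1}$, and $\beta'=\mathrm{id}$ on $\Lambda V^{\leq n-1}$. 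Thus $v\mapsto\{y'_v\}$ defines an element of $\mathrm{Hom}\big(V^n,H^n(\Lambda V^{\leq n-1})\big)$, and \lemref{l3} shows this assignment is well-defined on homotopy classes (changing $y'_v$ by a coboundary does not change $[\beta']$) and injective (if all $\{y'_v\}=0$ then $\beta'\simeq\mathrm{id}$ by \lemref{l3} again). Conversely every such homomorphism arises this way, giving the bijection $\ker\Psi_n\cong\mathrm{Hom}\big(V^n,H^n(\Lambda V^{\leq n-1})\big)$; I would then check this bijection is a group isomorphism, which amounts to the observation that composing two maps of the form $v\mapsto v+(\text{cocycle})$ adds the cocycles modulo decomposables that are killed in homology — a short computation using minimality ($\partial V\subset\Lambda^{\geq2}V$).

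For the splitting, the surjectivity construction already suggests a section: send $(\rho,[\gamma])\in\D^n$ to the class of the extension $\beta$ built above. The subtlety is that this requires a \emph{coherent} choice of the DGA-map $\gamma$ representing $[\gamma]$ and of the primitives $u_v$; I expect this to be the main obstacle, and the way around it is to use that $\D^n$ is a subgroup of $\aut(V^n)\times\mathcal{E}(\Lambda V^{\leq n-1})$ and that the short exact sequence~(\ref{z2}) is one of groups with abelian kernel, so it suffices to produce \emph{any} section of sets that is multiplicative, or equivalently to show the extension is classified by a cohomology class that vanishes. Concretely, I would fix once and for all representative DGA-maps and primitives, show that the resulting $\beta$'s compose correctly up to homotopy modulo the kernel, and absorb the discrepancy into $\mathrm{Hom}\big(V^n,H^n(\Lambda V^{\leq n-1})\big)$ — the point being that the failure of multiplicativity is measured by a cocycle valued in the kernel, and because the kernel here is a rational vector space (so $H^2$ with these coefficients over $\Q$ of the relevant group vanishes in the cases of interest, or more elementarily because one can rigidify the choices) the sequence splits. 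Finally, the statement~(\ref{v5}) for $\mathcal{E}_\#$ follows by restricting the entire argument to maps inducing the identity on $V$: then $\tilde\beta=\mathrm{id}$, the first coordinate $\rho$ is forced to be $\mathrm{id}$, $\D^n$ is replaced by $\D^n_\#$ (Remark~\ref{r14}(3)), and the same kernel computation and splitting go through verbatim.
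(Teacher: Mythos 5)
Your proposal is correct and follows essentially the same route as the paper: the same explicit section $(\rho,[\gamma])\mapsto[\beta]$ with $\beta(v)=\rho(v)+u_v$ built from the coboundary condition coming from diagram~(\ref{z1}), the same identification of $\ker\Psi_n$ with $\mathrm{Hom}\big(V^{n}, H^{n}(\Lambda V^{\leq n-1})\big)$ via Lemma~\ref{l1} (and Lemma~\ref{l3} for well-definedness), and the same restriction argument for the $\mathcal{E}_{\#}$ sequence. The only difference is that where you hedge about making the section multiplicative, the paper simply exhibits the compatible choice of primitives $z_v=y_{\xi(v)}+\alpha'(y_v)$ directly — your "rigidify the choices" alternative — so no cohomological vanishing argument is needed.
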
	
%\begin{proof}
%The proof employs a method analogous to that of Theorem \ref{tt0}, relying on similar  arguments and techniques.
%\end{proof}

\begin{proof}
	First, 	if $(\xi,[\alpha])\in \D^{n}_{}$, then we   define $\sigma_n: \D^{n}\to 	\mathcal{E} (\Lambda V^{\leq n})$ as follows. For every $v\in V^n$,  we have
	$$H^{n+1}(\alpha)\circ b^{n}(v)=\alpha\circ \partial(v)+\im \partial,\,\,\,\,\,\,\,\,\,\,\,\,\,b^n\circ\xi(v)=\partial\circ\xi(v)+\im \partial.$$
	Since the diagram in Definition \ref{d4} commutes, there exists an element $y_v\in \Lambda V^{\leq n}$ of degree $n$ such that 
	\begin{equation}\label{z4}
		\alpha\circ \partial(v)-\partial\circ\xi(v)=\partial(y_v).
	\end{equation}
	As a result, we define  $\beta: (\Lambda V^{\leq n},\partial)\to (\Lambda V^{\leq n},\partial)$ by setting
	\begin{equation}\label{z6}
		\beta(w) =
		\begin{cases}
			\xi(v)+y_v & \text{if } v \in  V^n, \\
			\alpha(v)& \text{if } v\in V^{\leq n-1}.
		\end{cases}
	\end{equation}
	Due to the relation (\ref{z4}),  it is easy to verify that  $\beta$ is  a DGA-map.   Moreover,  the graded linear map   $\tilde{\beta}$ induced by  $\beta$ on the indecomposables satisfies:
	\begin{equation}\label{z7}
		\tilde{\beta}(w) =
		\begin{cases}
			\xi(w) & \text{if } w \in  V^n,\\
			\tilde{	\alpha}(w)& \text{if } v\in V^{\leq n-1}.
		\end{cases}
	\end{equation}
	implying  that $\tilde{\beta}$ is an isomorphism. Consequently,  $[\beta]\in \mathcal{E} (\Lambda V^{\leq n})$ which allows us to  set \begin{equation}\label{z5}
		\sigma_n((\xi,[\alpha]))=[\beta].
	\end{equation} 
	
	Now, let us prove that $\Psi_n\circ \sigma_n=id$ meaning that $\Psi_n$ is surjective as well as the  sequence $(\ref{z2})$ splits. Indeed, we have
	$$\Psi_n\circ \sigma_n((\xi,[\alpha]))=\Psi_n([\beta])=(\tilde{\beta},[\beta_{\leq n-1}])=(\xi,[\alpha]),$$
	here we use the relations (\ref{z6}), (\ref{z7}) and (\ref{z5}). 
	\medskip
	
	Next, we have 
	\begin{equation}\label{v4}
	\ker\Psi_n=\Big\{[\beta]\in \mathcal{E} (\Lambda V^{\leq n})\mid \Psi_n([\beta])=(\tilde{\beta},[\beta_{\leq n-1}])=(id,[id]) \Big\}.
	\end{equation}
	In other words, if $[\beta]\in \ker\Psi_n$,  then we can write:
	\begin{eqnarray}\label{f2}
		\beta(v)&=&v+y_v,\,\,\,\,\, \forall v\in V^{n},\nonumber\\
		\beta&\simeq&id,\,\,\,\,\, \text{ on } \Lambda V^{\leq n-1}\nonumber.
	\end{eqnarray}
	According to lemma \ref{l1}, there exists a unique  cocycle $y'_v\in \Lambda V^{\leq n-1}$ of degree $n$ such that  $\beta$ is homotopic to the following DGA-map
	\begin{eqnarray}\label{gg2}
		\alpha'(v)&=&v+y_v',\,\,\,\,\, v\in V^{n},\nonumber\\
		\alpha'&=&id,\,\,\,\,\,\,\,\,\,\,\,\,\,
		\mathrm{ 	on }\,\, \,\Lambda V^{\leq n-1}\nonumber.
	\end{eqnarray}
	Note that the cocycle $y_w'$ is decomposable,  which means that  the homology class $\{y_w'\}\in H^{n}(\Lambda V^{\leq n-1})$. This allows us to define the map: 
	$$\Theta:\ker \Psi_n \to \mathrm{Hom}\big(V^{n}, H^{n}(\Lambda V^{\leq n-1})\big)$$
	 by setting:
	$$\Theta([\beta])=f, $$ 
	where $f:V^{n}\to H^{n}(\Lambda V^{\leq n-1})$ is defined by $f(v)=\{y'_v\}.$ 
	\medskip
	
	Then, we define: $$\Theta': \mathrm{Hom}\big(V^{n}, H^{n}(\Lambda V^{\leq n-1})\big)\to \ker\Psi_n,\,\,\,\,\,\,\,\,\,\,\,\,\,f\mapsto \Theta(f)=[\beta]$$ 
	where the DGA-map $\beta$ is given by: 
	\begin{eqnarray}\label{G67}
		\beta(v)&=&v+z_v,\,\,\,\,\, v\in V^{n},\nonumber\\
		\beta_{}&=&id,\,\,\,\text{ on }\,\,V^{\leq n-1}\nonumber.
	\end{eqnarray} 
	Here the element $z_v$  is given $f(v)=\{z_v\}$. 	Clearly, $[\beta]\in  \ker\Psi_n$ and we have $\Theta\circ \Theta'=id$ and $\Theta'\circ \Theta=id,$ implying that $\Theta$ is bijective. 
	\medskip

Finally, let us prove that   $\Psi_n$ and $\sigma_n$ are group homomorphisms. For every $[\alpha]$ and $[\beta]$ in $\mathcal{E}(\Lambda  V^{\leq n}))$, we have
	\begin{eqnarray}
		\Psi_n([\alpha].[\beta])&=&\Psi_n([\alpha\circ\beta])=\Big(\widetilde{\alpha\circ\beta},[\alpha_{\leq n-1}\circ\beta_{\leq n-1}]\Big)\nonumber\\
		&=&(\widetilde{\alpha},[\alpha_{\leq n-1}])\circ(\widetilde{\beta},[\beta_{\leq n-1}])=\Psi_n([\alpha])\circ \Psi_n([\beta])\nonumber.
	\end{eqnarray} 
	Thus, $\Psi_n$ is a group   homomorphism. 
	\medskip
	
Now, for any  $(\xi',[\alpha']), (\xi,[\alpha])\in\D^n$,  we need to prove that
	$$\sigma_n((\xi',[\alpha']))\circ \sigma_n((\xi,[\alpha]))=\sigma_n\Big((\xi',[\alpha'])\circ (\xi,[\alpha])\Big).$$
	From (\ref{z6}) and (\ref{z5}), we know that 
	\begin{equation*}
		\beta(v) =
		\begin{cases}
			\xi(v)+y_v & \text{if } v \in  V^n, \\
			\alpha(v)& \text{if } v\in V^{\leq n-1}.
		\end{cases},\,\,\,\,\,\,\,\,\,\,\,\,\,\beta'(w) =
		\begin{cases}
			\xi'(v)+y'_v & \text{if } v \in  V^n, \\
			\alpha'(v)& \text{if } v\in V^{\leq n-1}.
		\end{cases}
	\end{equation*}
	where   $y'_v,y_v\in \Lambda V^{\leq n-1})$ satisfy:
	$$\alpha\circ \partial(v)-\partial\circ\xi(v)=\partial(y_v),\,\,\,\,\,\,\,\,\,\,\,\,\,	\alpha'\circ \partial(v)-\partial\circ\xi'(v)=\partial(y'_v).$$
	%$$\sigma((\xi',[\alpha']))=[\beta'],\,\,\,\,\,\,\,\,\,\,\,\,\,\sigma((\xi,[\alpha]))=[\beta]$$
	Now, on one hand, we have:  
	\[
	\sigma_n((\xi',[\alpha']))\circ \sigma_n((\xi,[\alpha]))=[\beta']\circ[\beta]=[\beta'\circ\beta]
	\]
	where, for \( v\in V^{n} \),  
	\[
	\beta'\circ\beta(v) =\beta'(\beta(v))=\beta'(\xi(v)+y_{v} )=\beta'(\xi(v))+\beta'(y_v)=\xi'\circ\xi(v)+y_{\xi(v)}+\alpha'(y_v),
	\]
	where the elements \( y_{\xi(v)}\in \Lambda V^{\leq n-1} \) satisfy:  
	\begin{equation}\label{x8}
		\alpha'\circ \partial(\xi(v))-\partial\circ\xi'(\xi(v))=\partial(y_{\xi(v)}).
	\end{equation}
	For \( v\in V^{\leq n-1} \), we obtain:  
	\[
	\beta'\circ\beta(v) =\beta'(\beta(v))=\beta'(\alpha(v))=\alpha'\circ\alpha(v).
	\]
	Thus, we conclude that:  
	\begin{equation}\label{x10}
		\beta'\circ	\beta(v) =
		\begin{cases}
			\xi'\circ \xi(v)+y_{\xi(v)}+\alpha'(y_v) & \text{if } v \in  V^n, \\
			\alpha'\circ\alpha(v)& \text{if } v\in V^{\leq n-1}.
		\end{cases}
	\end{equation}
	
	On the other hand, we have: 
	\[
	\sigma_n\big((\xi',[\alpha'])\circ (\xi,[\alpha])\big)=\sigma_n\big((\xi'\circ \xi,[\alpha'\circ \alpha])\big)=[\beta^*],
	\]
	where, by (\ref{z6}), the DGL-map \( \beta^* \) is given by:  
	\begin{equation}\label{x9}
		\beta^*(v) =
		\begin{cases}
			\xi'\circ \xi(v)+z_v& \text{if } v \in  V^n, \\
			\alpha'\circ\alpha(v)& \text{if } v\in V^{\leq n-1}.
		\end{cases}
	\end{equation}
	where \( z_v\in\Lambda V^{\leq n-1}\) must satisfy:  
	\[
	\alpha'\circ\alpha\circ \partial(v)-\partial\circ\xi'\circ\xi(v)=\partial(z_v).
	\]
	Now, using (\ref{x8}), we obtain:  
	\[
	\partial(y_{\xi(v)}+\alpha'(y_v))=\partial(y_{\xi(v)})+\partial(\alpha'(y_v))=\alpha'\circ \partial(\xi(v))-\partial\circ\xi'(\xi(v))+\partial(\alpha'(y_v))
	\]
	\[
	=\alpha'\circ \partial(\xi(v))-\partial\circ\xi'(\xi(v))+\alpha'\circ \partial(y_v)=\alpha'\circ \Big(\partial(\xi(v))+\partial(y_v)\Big)-\partial\circ\xi'(\xi(v)).
	\]
	Consequently, if we choose \( z_v=y_{\xi(v)}+\alpha'(y_v) \), then taking into account (\ref{x10}) and (\ref{x9}), we obtain:  
	\[
	\sigma_n((\xi',[\alpha']))\circ \sigma_n((\xi,[\alpha]))=\sigma_n\Big((\xi',[\alpha'])\circ (\xi,[\alpha])\Big),
	\]
	implying that \( \sigma_n \) is a group homomorphism.  
\medskip
	
Finally, according to (\ref{v4}), it is easy to see that: 
$$\ker\Psi_n\subset \mathcal{E}_{\#} (\Lambda V^{\leq n}),\,\,\,\,\,\text{ and }\,\,\,\,\,\Psi_n( \mathcal{E}_{\#} (\Lambda V^{\leq n}))=\D_{\#} ^{n}.$$ 
Thus, we derive the short exact sequence (\ref{v5}).
\end{proof}
Since the two short exact sequences in Theorem \ref{m1} split, we deduce the following result.
\begin{corollary}\label{m2}
Let  $(\Lambda V^{\leq m},\partial)$ be a  Sullivan algebra. For every $n\leq m$,  we have: 
		$$\mathcal{E} (\Lambda V^{\leq n})\cong \mathrm{Hom}\big(V^{n}, H^{n}(\Lambda V^{\leq n-1})\big) \rtimes
		\D^{n}$$
		$$\mathcal{E}_{\#} (\Lambda V^{\leq n})\cong \mathrm{Hom}\big(V^{n}, H^{n}(\Lambda V^{\leq n-1})\big) \rtimes
		\D_{\#} ^{n}$$
\end{corollary}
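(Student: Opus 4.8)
The plan is to invoke the standard correspondence between split short exact sequences of groups and internal semidirect product decompositions, and then feed in the data already produced in the proof of \thmref{m1}. Recall the elementary fact: if $1 \to N \overset{i}{\to} E \overset{\pi}{\to} Q \to 1$ is a short exact sequence of groups admitting a homomorphic section $s \colon Q \to E$ with $\pi \circ s = \mathrm{id}_Q$, then $E \cong N \rtimes Q$, where every element of $E$ factors uniquely as $i(n)\, s(q)$ and the action of $Q$ on $N$ is conjugation by $s(q)$. I would state this as a one-line reminder, since no nontrivial content is hidden in it.

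For the first decomposition I would take $E = \mathcal{E}(\Lambda V^{\leq n})$, $Q = \D^{n}$, $N = \mathrm{Hom}\big(V^{n}, H^{n}(\Lambda V^{\leq n-1})\big)$, with $\pi = \Psi_n$ and section $s = \sigma_n$. The proof of \thmref{m1} already supplies everything needed: $\Psi_n$ and $\sigma_n$ are group homomorphisms with $\Psi_n \circ \sigma_n = \mathrm{id}$, the bijection $\Theta$ identifies $\ker\Psi_n$ with $\mathrm{Hom}\big(V^{n}, H^{n}(\Lambda V^{\leq n-1})\big)$, and $\ker\Psi_n = \im\, i$ is automatically normal in $E$. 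This yields
\[
\mathcal{E}(\Lambda V^{\leq n}) \cong \mathrm{Hom}\big(V^{n}, H^{n}(\Lambda V^{\leq n-1})\big) \rtimes \D^{n},
\]
with the action given explicitly by $q \cdot f = \Theta\big(\sigma_n(q)\, \Theta^{-1}(f)\, \sigma_n(q)^{-1}\big)$.

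For the second decomposition I would restrict the whole picture to $\mathcal{E}_{\#}(\Lambda V^{\leq n})$. By the closing lines of the proof of \thmref{m1} we have $\ker\Psi_n \subset \mathcal{E}_{\#}(\Lambda V^{\leq n})$ and $\Psi_n\big(\mathcal{E}_{\#}(\Lambda V^{\leq n})\big) = \D^{n}_{\#}$, so $(\ref{v5})$ is the restriction of $(\ref{z2})$; moreover the section $\sigma_n$ carries $\D^{n}_{\#}$ into $\mathcal{E}_{\#}(\Lambda V^{\leq n})$, because formula $(\ref{z6})$ with $\xi = \mathrm{id}$ forces $\tilde\beta = \mathrm{id}$ by $(\ref{z7})$. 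Applying the same group-theoretic principle then gives the second isomorphism.

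The only point requiring care — and hence the \emph{main obstacle}, mild as it is — is the bookkeeping of identifications: the normal factor of the semidirect product is literally $\ker\Psi_n$, renamed $\mathrm{Hom}\big(V^{n}, H^{n}(\Lambda V^{\leq n-1})\big)$ via $\Theta$, which \thmref{m1} only asserts to be a bijection of sets. I would therefore add the observation that $\Theta$ is a group isomorphism once $\ker\Psi_n$ carries its subgroup structure: this follows from the fact that the assignment $f \mapsto \Theta'(f) = [\beta]$ in the proof is additive, since the cocycles $z_v$ representing $f(v)$ add under addition of homomorphisms, and composition of the corresponding DGA-maps on $V^{n}$ translates into this addition. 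With that remark in place, the corollary is a formal consequence of \thmref{m1}.
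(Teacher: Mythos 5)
Your proposal is correct and follows essentially the same route as the paper, which simply deduces the corollary from the fact that the two short exact sequences in \thmref{m1} split; your added remark that $\Theta$ respects the group structures is a reasonable piece of extra care but does not change the argument.
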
	
\section{Topological applications}
As a direct consequence of the above theorems, we establish the following results, which explore the connection between the homology groups of an elliptic space \( X \) and the finiteness of the group \( \E(X) \). Before proceeding, we introduce the following remark.  

\begin{remark}\label{r10}  
	Let \( X \) be a rational elliptic space of formal dimension $p$ and  let  $m=\max\{k\mid \pi_{k}(X)\neq 0\}$ and of . By Proposition \ref{t13}, its Sullivan  model is given by \( (\Lambda V^{\leq m},\partial) \), where \( H^{p}(X,\Q) \cong\mathbb{Q} \) and \(H^{i}(X,\Q)= 0 \) for $i=p-1$ and $i>p$. Applying  the identifications \eqref{a2} and \eqref{f15}, for every $n\leq m$,  the group \( \D^n \) can be viewed as a subgroup of \( \operatorname{Aut}(H^{n}(X)) \times \E(X^{[n-1]}) \), which we denote by \( \D^{n}(X) \). Likewise, \( \D_{\#}^n \) can be regarded as a subgroup of \( \E_{\#}(X^{[n-1]}) \), denoted \( \D_{\#}^{n}(X) \).
\end{remark}
\begin{corollary}
	\label{c1}
Let  $X$ be a  rational elliptic space, and let  $m=\max\{k\mid \pi_{k}(X)\neq 0\}$. For every $n\leq m$,  we have,  
$$\mathcal{E}(X^{[n]})\cong  \big(\pi_{n}(X), H^{n}(X^{[n-1]})\big)\rtimes\mathcal{D}^{n}(X)$$
$$\mathcal{E}_*(X^{[n]})\cong  \big(\pi_{n}(X), H^{n}(X^{[n-1]})\big)\rtimes\mathcal{D}_*^{n}(X).$$
\end{corollary}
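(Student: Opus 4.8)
The plan is to read off \corref{c1} from \corref{m2} through the dictionary between minimal Sullivan models and rational homotopy types. First I would recall from \remref{r10} that, $X$ being elliptic, its minimal Sullivan model has the form $(\Lambda V^{\leq m},\partial)$ with $m=\max\{k\mid\pi_{k}(X)\neq 0\}$ (this rests on \propref{t13}). Hence for each $n\leq m$ the truncation $(\Lambda V^{\leq n},\partial)$ is again a Sullivan algebra, and by the identifications recorded after \eqref{f15} it is a minimal model of the Postnikov section $X^{[n]}$, while $(\Lambda V^{\leq n-1},\partial)$ models $X^{[n-1]}$.

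Next I would apply \corref{m2} to the Sullivan algebra $(\Lambda V^{\leq m},\partial)$. For every $n\leq m$ it yields the split semidirect product decompositions
$$\mathcal{E}(\Lambda V^{\leq n})\cong\mathrm{Hom}\big(V^{n},H^{n}(\Lambda V^{\leq n-1})\big)\rtimes\D^{n},\qquad \mathcal{E}_{\#}(\Lambda V^{\leq n})\cong\mathrm{Hom}\big(V^{n},H^{n}(\Lambda V^{\leq n-1})\big)\rtimes\D_{\#}^{n}.$$
I would then substitute the identifications coming from \eqref{a2} and \eqref{f15}, namely $\mathcal{E}(\Lambda V^{\leq n})\cong\mathcal{E}(X^{[n]})$, $\mathcal{E}_{\#}(\Lambda V^{\leq n})\cong\mathcal{E}_{\#}(X^{[n]})$, $H^{n}(\Lambda V^{\leq n-1})\cong H^{n}(X^{[n-1]})$, and $V^{n}$ identified with $\pi_{n}(X)$, together with the notation $\D^{n}(X)$ and $\D_{\#}^{n}(X)$ fixed in \remref{r10} for $\D^{n}$ and $\D_{\#}^{n}$ regarded inside $\operatorname{Aut}(H^{n}(X))\times\mathcal{E}(X^{[n-1]})$ and inside $\mathcal{E}_{\#}(X^{[n-1]})$. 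After these replacements the two displays above are exactly the asserted isomorphisms.

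The one point that needs care is essentially bookkeeping: one must check that these identifications are mutually compatible, i.e. that the isomorphism $\mathcal{E}(\Lambda V^{\leq n})\cong\mathcal{E}(X^{[n]})$ carries the algebraic map $\Psi_{n}$ of \thmref{m1} to the topological map sending a self-equivalence of $X^{[n]}$ to the pair formed by its induced action on $\pi_{n}(X)$ and its restriction to $X^{[n-1]}$, so that $\D^{n}$ and $\D^{n}_{\#}$ correspond to $\D^{n}(X)$ and $\D^{n}_{\#}(X)$, and the kernels $\mathrm{Hom}(V^{n},H^{n}(\Lambda V^{\leq n-1}))$ correspond to $\mathrm{Hom}(\pi_{n}(X),H^{n}(X^{[n-1]}))$. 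This follows from naturality of the minimal Sullivan model (a map of spaces induces a DGA-map of models compatible with the truncations) and from the functoriality of the Whitehead exact sequence stated in \remref{r12}. Since nothing beyond \corref{m2} is used, this compatibility check, together with keeping the Postnikov and degree indexing straight, is the main thing to get right.
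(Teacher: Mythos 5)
Your proposal is correct and follows essentially the same route as the paper: the paper's own (very brief) proof likewise notes that the Sullivan model of $X$ has the form $(\Lambda V^{\leq m},\partial)$ and then invokes the split exact sequences of Theorem \ref{m1} (equivalently Corollary \ref{m2}), translated through the identifications \eqref{a2} and \eqref{f15}. Your additional compatibility check of the dictionary between $\Psi_n$ and the topological restriction map is a reasonable elaboration of what the paper leaves implicit.
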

\begin{proof}
In this case, the Sullivan model of \( X \) has the form \( (\Lambda V^{\leq m}, \partial) \), and using the fact that the sequences in Theorem \ref{m1} are split. Note that in this case we have $\mathcal{E}(X)\cong\mathcal{E}(X^{[m]}).$ 
\end{proof}
\begin{theorem}
	\label{t3}
Let  $X$ be a  rational elliptic space, and let $\pi_{m_1}(X),\cdots,\pi_{m_k}(X)$ be the nontrivial homotopy group of $X$, where $m_1\leq \cdots\leq m_k$. For every \( 1\leq j\leq k \), set  
	\begin{equation} \label{z88}
		\mathcal{L}^{m_j}(X) = \mathrm{Hom} \big( \pi_{m_j}(X), H^{m_j}(X^{[m_j-1]}) \big).
	\end{equation}
	Then   \( \mathcal{E}(X) \) is a subgroup of  
	\[
	\mathcal{L}^{m_k}(X) \rtimes \K(p_{k},\Q)  \times \cdots \times\mathcal{L}^{m_2}(X) \rtimes \K(p_2,\Q)\times \K(p_1,\Q),
	\]
	where $p_j=\dim \pi_{m_j}(X)$ for all $1\leq j\leq k$. 
	%	\item $\mathcal{E}(X)\cong  \Q\rtimes\mathcal{C}^{m+1}(X)$.
	%	\item $\mathcal{E}(X)\cong  \Q\oplus\Q\rtimes\mathcal{C}^{m+1}(X)	$.
	% 
\end{theorem}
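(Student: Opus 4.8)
The strategy is to build the embedding by induction on the Postnikov tower, peeling off one homotopy group at a time and using Corollary \ref{c1} at each stage. Let me set up the induction. Write $X^{[m_j]}$ for the $j$-th stage in the sense that $\pi_i(X^{[m_j]})=\pi_i(X)$ for $i\leq m_j$ and vanishes above; since the $m_1<\cdots<m_k$ are exactly the degrees where $\pi_*(X)$ is nonzero, $X^{[m_k]}$ has the rational homotopy type of $X$, so $\E(X)\cong\E(X^{[m_k]})$ by \eqref{f15}. I would prove by downward-or-upward induction on $j$ that $\E(X^{[m_j]})$ embeds into $\mathcal{L}^{m_j}(X)\rtimes\K(p_j,\Q)\times\cdots\times\mathcal{L}^{m_2}(X)\rtimes\K(p_2,\Q)\times\K(p_1,\Q)$.

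\medskip

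\noindent\textbf{Base case.} For $j=1$: $X^{[m_1]}$ is a rational wedge of spheres / product of Eilenberg–MacLane type with only $\pi_{m_1}(X)$ nonzero, so its Sullivan model is $(\Lambda V^{m_1},0)$ with $V^{m_1}\cong\Hom(\pi_{m_1}(X),\Q)$ of dimension $p_1$. Since the differential is zero there are no relations to preserve, and a DGA self-map is determined up to homotopy by a linear automorphism of $V^{m_1}$; hence $\E(X^{[m_1]})\cong\aut(V^{m_1})=\K(p_1,\Q)$. (Strictly, when $m_1$ is even one must be slightly careful, but $\Lambda$ of an even class is still polynomial and the endomorphisms realizing the identity on $V^{m_1}$ are homotopic to the identity, so the statement is unaffected.)

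\medskip

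\noindent\textbf{Inductive step.} Assume $\E(X^{[m_{j-1}]})$ embeds into $G_{j-1}:=\mathcal{L}^{m_{j-1}}(X)\rtimes\K(p_{j-1},\Q)\times\cdots\times\K(p_1,\Q)$. Between $X^{[m_{j-1}]}$ and $X^{[m_j]}$ the only new homotopy appears in degree $m_j$, so the Sullivan model of $X^{[m_j]}$ is obtained from that of $X^{[m_{j-1}]}$ by adjoining $V^{m_j}$ (dimension $p_j$) with $\partial$ landing in $\Lambda V^{\leq m_j-1}$; note $\Lambda V^{\leq m_j-1}=\Lambda V^{\leq m_{j-1}}$ models $X^{[m_j-1]}\simeq X^{[m_{j-1}]}$. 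Apply Corollary \ref{c1} with $n=m_j$:
\[
\E(X^{[m_j]})\cong\mathcal{L}^{m_j}(X)\rtimes\D^{m_j}(X).
\]
By Remark \ref{r10}, $\D^{m_j}(X)$ is a subgroup of $\aut(H^{m_j}(X))\times\E(X^{[m_j-1]})$. Since $X$ is elliptic, $\dim\pi_{m_j}(X)=p_j<\infty$, and the forgetful projection $\D^{m_j}(X)\to\aut(\pi_{m_j}(X)^\vee)=\K(p_j,\Q)$ sending $(\rho,[\gamma])\mapsto\rho$ is a group homomorphism; combined with the projection to the second factor $\E(X^{[m_j-1]})\cong\E(X^{[m_{j-1}]})$ it gives an injection $\D^{m_j}(X)\hookrightarrow\K(p_j,\Q)\times\E(X^{[m_{j-1}]})$. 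Feeding the inductive hypothesis $\E(X^{[m_{j-1}]})\hookrightarrow G_{j-1}$ and using functoriality of the semidirect product (a subgroup $H\leq B$ acting compatibly on $A$ yields $A\rtimes H\leq A\rtimes B$), we obtain
\[
\E(X^{[m_j]})\cong\mathcal{L}^{m_j}(X)\rtimes\D^{m_j}(X)\hookrightarrow\mathcal{L}^{m_j}(X)\rtimes\big(\K(p_j,\Q)\times G_{j-1}\big)=G_j,
\]
since $\K(p_j,\Q)\times G_{j-1}$ already has the stated product form and $G_{j-1}$ acts trivially on $\mathcal{L}^{m_j}(X)$ (it only sees degrees $<m_j$, while the relevant action on $\mathcal{L}^{m_j}(X)$ factors through $\K(p_j,\Q)$ and $\aut(H^{m_j}(X^{[m_j-1]}))$ — here one checks the action is captured inside the listed factors). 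Setting $j=k$ gives the theorem.

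\medskip

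\noindent\textbf{Main obstacle.} The routine inputs (Corollary \ref{c1}, the elliptic finiteness $p_j<\infty$ from Proposition \ref{t13}) are in hand; the delicate point is bookkeeping the semidirect-product structure — verifying that the action of $\D^{m_j}(X)$ on $\mathcal{L}^{m_j}(X)=\Hom(\pi_{m_j}(X),H^{m_j}(X^{[m_j-1]}))$ is exactly the one induced by the $\K(p_j,\Q)$-action on the source and the (automorphism-of-cohomology part of the) $\E(X^{[m_j-1]})$-action on the target, so that the embedding respects all the semidirect products simultaneously and the final group really is the iterated product displayed in the statement. This is a compatibility check on how $\Psi_{m_j}$ and $\sigma_{m_j}$ from Theorem \ref{m1} interact across consecutive Postnikov stages, and it is the one place where care is genuinely needed rather than formal.
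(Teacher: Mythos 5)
Your proposal is correct and follows essentially the same route as the paper: iterate Corollary \ref{c1} through the Postnikov stages, using that $\D^{m_j}(X)$ is by definition a subgroup of $\aut(\pi_{m_j}(X))\times\E(X^{[m_{j-1}]})$, and observe that at the bottom stage $\L^{m_1}(X)=0$ so only $\K(p_1,\Q)$ remains. The paper runs the same argument as a downward iteration from $j=k$ rather than an upward induction, but the content is identical.
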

\begin{proof}
First, recall that for every  $1\leq j\leq k$, we have:
$$\aut(\pi_{m_j}(X))\cong \K(\dim \pi_{m_j}(X),\Q)=\K(p_{j},\Q) $$
Next,	 by applying Corollary \ref{c1} and relation (\ref{z88}), we obtain:  
	\[
	\mathcal{E} (X)= \mathcal{E}(X^{[m_k]})\cong \L^{m_k}(X) \rtimes \D^{m_k}(X).
	\]
	As \( \D^{m_k}(X) \) is a subgroup of  $ \aut(\pi_{m_k}(X)) \times \mathcal{E} (X^{[{m_{k-1}}]})\cong \K(p_{m},\Q)  \times \mathcal{E} (X^{[m_{k-1}]})$,   using Corollary \ref{c1} and relation (\ref{z88}) again, we get:  
	\[
	\mathcal{E} (X^{m_{k-1}}) \cong \L^{m_{k-1}}(X) \rtimes \D^{m_{k-1}}(X).
	\]
	This implies that \( \mathcal{E} (X)\) is a subgroup of  $
	\L^{m_k}(X) \rtimes \K(p_{k},\Q)\times\L^{m_{k-1}}(X) \rtimes \D^{m_{k-1}}(X)$. 
	Similarly, since \( \D^{m_{k-1}}(X) \) is a subgroup of \( \K(p_{m_{k-1}},\Q) \times \mathcal{E} (X^{[m_{k-2}]}) \) and  
	\[
	\mathcal{E} (X^{[m_{k-2}]}) \cong \L^{m_{k-2}}(X) \rtimes \D^{m_{k-2}}(X),
	\]
	it follows that \( \mathcal{E} (X) \) is a subgroup of  
	\[
	\L^{m_k}(X) \rtimes \K(p_{k},\Q)\times \L^{m_{k-1}}(X) \rtimes  \K(p_{k-1},\Q)\times\L^{m_{k-2}}(X) \rtimes \D^{m_{k-2}}(X).
	\]
	By iterating this process, we conclude that \( \mathcal{E} (X) \) is a subgroup of  
	\[
	\L^{m_k}(X)\rtimes \K(p_{k},\Q)\times \L^{m_{k-1}}(X)\rtimes \K(m_{k-1},\Q)\times\cdots\times \L^{m_{1}}(X) \rtimes \D^{m_{1}}(X).
	\]
Note that since \(\pi_{m_1}(X), \dots, \pi_{m_k}(X)\) are the only nontrivial homotopy groups of \(X\), it follows that if \((\Lambda V, \partial)\) is the Sullivan model of \(X\), then \(V^i = 0\) for \(i < m_1\). Consequently, we obtain \(H^{m_1}(\Lambda V^{\leq m_1-1}) = 0\). As a result, we deduce:  
\[
\L^{m_{1}}(X) = \mathrm{Hom}\big(\pi_{m_{1}}(X), H^{m_{1}}(X^{[m_{1}-1]})\big) 
\cong \mathrm{Hom}\big(V^{m_{1}}, H^{m_{1}}(\Lambda V^{\leq m_{1}-1})\big) = 0.
\]
Moreover,  
\(\D^{m_{1}}\) is a subgroup of \( \K(p_{1},\Q) \times \mathcal{E} (\Lambda V^{\leq  m_{1}-1})= \K(p_{1},\Q) \). Consequently, the group \(\mathcal{E}(X)\) is a subgroup of:  
\[
\mathcal{L}^{m_k}(X) \rtimes \K(p_{k},\Q)  \times \cdots \times\mathcal{L}^{m_2}(X) \rtimes \K(p_2,\Q)\times \K(p_1,\Q),
\]
as desired.
\end{proof}
\begin{theorem}
	\label{t4}
	Let  $X$ be a  rational elliptic space, and let $\pi_{m_1}(X),\cdots,\pi_{m_k}(X)$ be the non trivial homotopy group of $X$. If   the group $\mathcal{E}_{}(X)$ is finite, then  it is a subgroup of
	$  \K(p_{m_1},\Q)\times \cdots\times  \K(p_{m_k},\Q)$, where $p_{m_j}=\dim \pi_{m_j}(X)$ for all $1\leq j\leq k$. 
\end{theorem}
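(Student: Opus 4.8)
The plan is to combine Theorem~\ref{t3} with the finiteness hypothesis to kill off the semidirect-product factors $\mathcal{L}^{m_j}(X)$. By Theorem~\ref{t3} we already know that $\mathcal{E}(X)$ embeds as a subgroup of
\[
\mathcal{L}^{m_k}(X) \rtimes \K(p_k,\Q) \times \cdots \times \mathcal{L}^{m_2}(X) \rtimes \K(p_2,\Q) \times \K(p_1,\Q).
\]
So it suffices to show that under the extra hypothesis that $\mathcal{E}(X)$ is finite, the image of $\mathcal{E}(X)$ actually lands inside the subgroup $\K(p_k,\Q)\times\cdots\times\K(p_1,\Q)$, i.e. that the $\mathcal{L}^{m_j}(X)$-components of any self-equivalence vanish. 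The key structural fact I would use is that each $\mathcal{L}^{m_j}(X)=\Hom(\pi_{m_j}(X),H^{m_j}(X^{[m_j-1]}))$ is a finite-dimensional $\Q$-vector space, hence a torsion-free abelian group; the only finite subgroup of a $\Q$-vector space is trivial. So if $\mathcal{E}(X)$ is finite and it contains, via the splittings of Corollary~\ref{c1}, a copy of the subgroup $\mathcal{L}^{m_j}(X)\cap\mathrm{image}$, that intersection must be trivial.

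Concretely, I would argue inductively down the Postnikov tower as in the proof of Theorem~\ref{t3}. By Corollary~\ref{c1}, $\mathcal{E}(X)=\mathcal{E}(X^{[m_k]})\cong \mathcal{L}^{m_k}(X)\rtimes\D^{m_k}(X)$. If $\mathcal{E}(X)$ is finite, then the normal subgroup $\mathcal{L}^{m_k}(X)$ (sitting inside $\mathcal{E}(X)$ via the splitting $\Theta'$ of Theorem~\ref{m1}) is a finite subgroup of the torsion-free group $\mathcal{L}^{m_k}(X)$, hence $\mathcal{L}^{m_k}(X)=0$ — more precisely, its image in $\mathcal{E}(X)$ is trivial, so that $\mathcal{E}(X)\cong\D^{m_k}(X)$, which by Remark~\ref{r14} and Remark~\ref{r10} is a subgroup of $\aut(\pi_{m_k}(X))\times\mathcal{E}(X^{[m_{k-1}]})\cong \K(p_k,\Q)\times\mathcal{E}(X^{[m_{k-1}]})$. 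Since $\mathcal{E}(X^{[m_{k-1}]})$ is a quotient (via $\Psi_{m_{k-1}}$, or rather a retract) of a subgroup of the finite group $\mathcal{E}(X)$, it too is finite, so the same argument applies: $\mathcal{E}(X^{[m_{k-1}]})\cong\D^{m_{k-1}}(X)\leq \K(p_{k-1},\Q)\times\mathcal{E}(X^{[m_{k-2}]})$, and so on down to $m_1$, where $\mathcal{L}^{m_1}(X)=0$ already and $\D^{m_1}\leq\K(p_1,\Q)$ by the last paragraph of the proof of Theorem~\ref{t3}. Assembling these inclusions gives $\mathcal{E}(X)\hookrightarrow\K(p_{m_1},\Q)\times\cdots\times\K(p_{m_k},\Q)$.

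The one point that needs care — and the main obstacle — is the passage ``$\mathcal{E}(X)$ finite $\Rightarrow$ $\mathcal{E}(X^{[m_{j}]})$ finite'' at each stage, since a priori Corollary~\ref{c1} only gives $\mathcal{E}(X^{[m_j]})$ as a factor appearing in a semidirect decomposition of $\mathcal{E}(X^{[m_{j+1}]})$ through $\D^{m_{j+1}}(X)$, and one needs that $\mathcal{E}(X^{[m_j]})$ is realized as (a retract of) a subgroup of $\mathcal{E}(X^{[m_{j+1}]})$, not merely a subquotient in some loose sense. This follows from Theorem~\ref{m1}: the projection $\Psi_{m_{j+1}}$ onto $\D^{m_{j+1}}$ is split by $\sigma_{m_{j+1}}$, so $\D^{m_{j+1}}(X)$ embeds in $\mathcal{E}(X^{[m_{j+1}]})$, and projecting $\D^{m_{j+1}}(X)\leq\aut(\pi_{m_{j+1}}(X))\times\mathcal{E}(X^{[m_j]})$ onto its second coordinate need not be injective — so instead I would note that finiteness is inherited by \emph{all} of these groups simultaneously because each $\mathcal{E}(X^{[n]})$ for $n\leq m_k$ is, by iterating the split exact sequences of Theorem~\ref{m1}, a subgroup of a fixed finite product of $\K(p_j,\Q)$'s with vector-space factors; once $\mathcal{E}(X)$ is assumed finite, every vector-space factor $\mathcal{L}^{m_j}(X)$ that embeds into it must be trivial, which is exactly what collapses the semidirect products. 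I would therefore phrase the induction so that at step $j$ one knows $\mathcal{E}(X^{[m_j]})$ is finite (being isomorphic to a subgroup of $\mathcal{E}(X)$ built from the splittings), deduces $\mathcal{L}^{m_j}(X)$ contributes trivially, and continues.
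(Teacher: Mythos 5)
Your proposal is correct and follows essentially the same route as the paper: invoke Theorem~\ref{t3} to embed $\mathcal{E}(X)$ in the semidirect product, then use that each $\mathcal{L}^{m_j}(X)$ is a $\Q$-vector space and hence torsion-free, so a finite subgroup must meet the normal vector-space factors trivially and therefore injects into $\K(p_{m_1},\Q)\times\cdots\times\K(p_{m_k},\Q)$. The additional inductive descent through the Postnikov stages that you sketch is not needed once the global embedding of Theorem~\ref{t3} is in hand, as you yourself observe.
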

\begin{proof}
According to Theorem \ref{t3}, we know that $\mathcal{E}(X)$ is a subgroup of 
$$ \mathcal{L}^{m_k}(X) \rtimes \K(p_{k},\Q)  \times \cdots \times\mathcal{L}^{m_2}(X) \rtimes \K(p_2,\Q)\times \K(p_1,\Q).$$
By (\ref{z88}), we know that  $\mathcal{L}^{m_j}(X)$ is a vector space over $\Q$, for every $1\leq j\leq k$. Therefore, $\mathcal{L}^{m_j}(X)$ does not contain any finite subgroup. Since $\mathcal{E}(X)$ is finite, it follows that $\mathcal{E}(X)$ is subgroup of $  \K(p_{m_1},\Q)\times \cdots\times  \K(p_{m_k},\Q)$.
\end{proof}
\begin{example}
 An elliptic  space \(X\) is called an \emph{\(F_0\)-space}, if \(H^{\text{odd}}(X;\mathbb{Q}) = 0\). Examples of \(F_0\)-spaces include products of even-dimensional spheres, complex Grassmannian manifolds, and homogeneous spaces \(G/H\) where \(\text{rank } G = \text{rank } H\).

It is well-known (see \cite[Proposition 32.10]{FHT}) that if \( X \) is an  elliptic space and \( (\Lambda V, \partial) \) its Sullivan model, then  the following statements are equivalent
	\begin{enumerate}
		\item \( X \) is an \( F_0 \)-space.
		\item \( \dim V^{\text{even}} = \dim V^{\text{odd}} \) and \( (\Lambda V, \partial) \) is pure, i.e., 
		\begin{equation}\label{v7}
			\partial(V^{\text{even}}) = 0 \quad \text{and} \quad \partial(V^{\text{odd}}) \subseteq \Lambda V^{\text{even}}
		\end{equation}
	\end{enumerate}

Therefore, let   $X$ be an  $F_0$-space, if $m_j$ is odd,  then the  vector space $\mathcal{L}^{m_j}(X)$, given in $(\ref{z88})$, is trivial.  Indeed, working algebraically,  let  \((\Lambda V, \partial)\) be  the Sullivan model of \(X\) and let 
$$\cdots  \to V^{m_j-1}\overset{b^{m_j-1}}{\longrightarrow} H^{m_j}(\Lambda V^{\leq m_j-1})\to H^{m_j}(\Lambda V)\to \cdots$$
the Whitehead exact sequence  of \((\Lambda V, \partial)\). As $X$ is an  $F_0$-space and $m_j$ is odd, it follows that $H^{m_j}(\Lambda V)=0$. Moreover, taking into account the relations (\ref{v6}) and (\ref{v7}), it follows that $b^{m_j-1}=0$. As a result, $H^{m_j}(\Lambda V^{\leq m_j-1})=0$ which implies that
\[
\L^{m_j}(X) = \mathrm{Hom}\big(\pi_{m_j}(X), H^{m_j}(X^{[m_j-1]})\big) 
\cong \mathrm{Hom}\big(V^{m_j}, H^{m_j}(\Lambda V^{\leq m_j-1})\big) = 0.
\]
Consequently,  if $\pi_{n_1}(X),\cdots,\pi_{n_k}(X)$ be the nontrivial homotopy group of $X$ of even degrees  and let  $\pi_{m_1}(X),\cdots,\pi_{m_k}(X)$ be the nontrivial homotopy group of $X$ of odd degrees, then we deduce that $\mathcal{E}(X)$ is a subgroup of
\[
 \bigg( \prod_{i=1}^{k} \mathcal{L}^{n_i}(X) \rtimes \K(q_i, \Q) \bigg) \times \prod_{j=1}^{k} \K(p_j, \Q).
\]
where $p_j=\dim \pi_{m_j}(X)$  and where $q_j=\dim \pi_{n_j}(X)$  for all $1\leq j\leq k$.
\end{example}

 \textbf{Conflict of interest.}

The author has no relevant financial or non-financial interests to disclose.\\

\textbf{Data availability Statement.}

Not applicable.\\

\bibliographystyle{amsplain}

\end{document}